 \newtheorem{thm}{Theorem}[section]
 \theoremstyle{definition}
 \theoremstyle{remark}
 \newtheorem{rem}[thm]{Remark}
 \numberwithin{equation}{section}
\begin{document}

%
%
%
%
%
%
%
%
%

\title[Sign-changing Liouville Equation]
      {A sign-changing Liouville Equation}

\author{Alejandro Sarria}

\address{
Department of Mathematics\\
University of Colorado at Boulder\\
Boulder, CO 80309-0395, USA
}

\email{alejandro.sarria@colorado.edu}

\author{Ralph Saxton}
\address{
Department of Mathematics\\
University of New Orleans\\
New Orleans, LA, 70148, USA
}

\email{rsaxton@uno.edu}
\subjclass{35B44, 35B10, 35B65, 35Q35, 35B40.}

\keywords{Sign changing weight Liouville equation, blow-up, global existence, Schwarzian derivative.}

\date{October 1, 2014}

\begin{abstract}
We examine periodic solutions to an initial boundary value problem for a Liouville equation with sign-changing weight. A representation formula is derived  both for singular and nonsingular boundary data, including data arising from fractional linear maps. In the case of singular boundary data we study the effects the induced singularity has on the interior regularity of solutions. Regularity criteria are also found for a generalized form of the equation. 
\end{abstract}

\maketitle
\section{Introduction}
\label{sec:intro}
In this article, we study regularity of periodic solutions to the initial boundary value problem
\begin{equation}
\label{eq:liouville}
\begin{cases}
\partial_{\alpha t}\ln u=f(\alpha)u,\,\,\,\,\,\,\,\,\,\,\,&\alpha\in(0,1),\,\,\,t>0,
\\
u(\alpha,0)=u_0(\alpha),\,\,\,\,\,\,\,\,\,\,\,\,&\alpha\in[0,1],
\\
u(0,t)=u(1,t)=g(t),\,\,\,\,\,\,\,\,\,\,\,\,\,&t\geq0
\end{cases}
\end{equation}
for given bounded continuous functions $f$ and $u_0>0$ with prescribed boundary data $g>0$. Moreover, replacing $u$ on the right-hand side of \eqref{eq:liouville}i) by an arbitrary nonnegative function $\mathcal{F}(u)\in C^1(0,+\infty)$ we also establish regularity criteria for the resulting generalization of \eqref{eq:liouville}. 
Note that for $u>0$ to be $\alpha$-periodic, integration of (\ref{eq:liouville})i) (or of its generalization) over $(0, 1)$ requires $f(\alpha)$ to have at least one zero in $(0, 1)$ and to change sign if it is not identically zero.
We will refer to (\ref{eq:liouville}) as an initial periodic-boundary value problem for the sign-changing Liouville equation. 

Two particular versions of   Liouville's equation
\begin{equation}
\label{eq:hyp}
\square \psi (x,\tau) +\epsilon e^{\psi}=0,\,\,\,\,\,\,\,\,\,\,\,\,\,\,\,\epsilon=\pm1
,\,\,\,\,\,\,\,\,\,\,\,\,\,\,\,\,\,\square\equiv\partial^2_{\tau}-\partial^2_x\end{equation}
occur in various applications ranging from plasma physics and field theoretical modeling to fluid dynamics. This has made both versions of (\ref{eq:hyp}) a frequent topic of  investigation. In particular cases, the equation can be interpreted as a model for a  self-interacting scalar field in two-dimensional space-time, whose  properties have been the subject of extensive study (see for instance \cite{Crowdy1}-\cite{P2}). Equation (\ref{eq:liouville})i) is  obtained from (\ref{eq:hyp})  on changing to characteristic coordinates $\tau=\alpha+t$ and $x=\alpha-t$ and  setting $\psi=\ln u$, with $f(\alpha)$ replacing $\epsilon$ in the resulting equation. Further, the elliptic  Liouville equation
\begin{equation}
\label{eq:ell}
\Delta\ln\phi=-K\phi^2,\,\,\,\,\,\,\,\,\,\,\,\,\,\,\,\Delta\equiv\partial^2_x+\partial^2_y,
\end{equation}
appears in the study of two-dimensional steady, incompressible Euler flow with $\phi=e^{\psi}$, for $\psi$ the stream function relating  vorticity to the  velocity field, (\cite{S1}, \cite{Crowdy2}). As in its hyperbolic counterpart, (\ref{eq:ell}) reduces to (\ref{eq:liouville})i) along curves $\alpha=x+iy$ and $t=x-iy$ for $f\equiv-\frac{K}{2}$ and $\phi^2=u$.

Amongst its many physical applications, (\ref{eq:liouville})i) is found in Riemannian geometry. On prescribing the Gaussian curvature $K(x,\tau)$ for a pseudo-metric $ds^2=g^{ij}dx_idx_j=e^{2v(x,\tau)}(dx^2-d\tau^2)$ in two-dimensional Minkowski space,  the function $v$  satisfies the relation 
$$K(x,\tau)=-e^{-2v(x,\tau)}\square v(x,\tau)$$
in isothermal coordinates $(x,\tau)$, (\cite{rogers}).
In the case of constant Gaussian curvature, the change to characteristic variables 
then leads to Liouville's equation in the form, (\cite{L1}),
$$\partial_{\alpha t}v=Ke^{2v}.$$
In a related setting, the case of sign-changing $K$ has been studied recently for an elliptic version of the equation, (\cite{Ruf1}). 

We note that the relevance of (\ref{eq:liouville})i) in the field of fluid dynamics is not limited to the hyperbolic and elliptic models (\ref{eq:hyp}) and (\ref{eq:ell}). Indeed, if we restrict ourselves to quantities $u$ having 
constant spatial mean, then for a prescribed periodic function $f$ and initial data $u_0\equiv1$, (\ref{eq:liouville})i) appears in the study of  classes of semi-bounded solutions to the three-dimensional incompressible Euler equations, (\cite{Okamoto,Wunsch,Sarria1}). In this context, $f$ controls the concavity of the components of the velocity field and $u$ represents the jacobian of the transformation associated to the particle trajectories in the fluid. Finally, we remark that the subsequent generalization \eqref{u}i) may also have applications in the study of bi-Hamiltonian equations such as the $\mu$Hunter-Saxton equation (\cite{Khesin}), which describes the orientation of highly inertial liquid crystal director fields in the presence of an external magnetic field.

The outline for the remainder of the paper is as follows. A general representation formula for periodic solutions to (\ref{eq:liouville}) is derived in \S\ref{sec:solution}, and certain aspects of the structure of discontinuous and singular solutions are examined in \S\ref{sec:singular}. In section \S\ref{subsec:smoothbdry},  $L^{\infty}$ 
boundedness is studied for smooth boundary data $g(t)$, with the effects of singular $g$ on  interior regularity of solutions being considered in \S\ref{subsec:singularbdry}. Finite-time blowup in  $L^p$ , $1\leq p<\infty$, is  discussed in \S\ref{subsec:lpsmooth}. Lastly, regularity criteria for a generalization of \eqref{eq:liouville} (see \eqref{u}) is established in \S\ref{sec:generalization}. The reader may then refer to \S\ref{sec:examples} for specific examples.

\section{The Representation Formula}
\label{sec:solution}
In this section we derive a representation formula for solutions to (\ref{eq:liouville}). We begin by noticing that if $y$ satisfies the associated problem
\begin{equation}
\label{eq:p1}
\begin{cases}
\partial_{\alpha t}\ln y=f(\alpha)u_0(\alpha)y,\,\,\,\,\,\,&\,\,t>0,
\\
y(\alpha,0)=1,\,\,\,\,\,\,\,\,\,\,\,\,\,\,\,\,\,\,\,\,\,\,\,\,\,\,\,\,\,\,\,\,\,&\alpha\in[0,1],
\end{cases}
\end{equation}
then 
\begin{equation}
\label{eq:truesol}
\begin{split}
u(\alpha,t)=u_0(\alpha)y(\alpha,t).
\end{split}
\end{equation}
Below, in formula (\ref{eq:ysoln}), we have used (\ref{eq:p1})   to establish a representation formula for $y(\alpha, t)$ in order to find $u(\alpha, t)$.

We first notice that on dividing (\ref{eq:p1})i) by $y$, differentiating in time, and using the calculus identity $\partial_t(z^{-1}\partial_{\alpha t}\ln z)=z^{-1}\partial_{\alpha}(\partial_{tt}\ln z - \frac{1}{2}(\partial_t\ln z)^2)$
this shows
\begin{equation}
\label{eq:eq0}
\partial_{\alpha}\mathcal{R}(\partial_t\ln y; t)=0
\end{equation}
where
\begin{equation}
\label{eq:eq2}
\mathcal{R}(v; t)\equiv\partial_t v-\frac{1}{2}v^2.
\end{equation}
Next, integrating (\ref{eq:eq0})  from $0$ to $\alpha$ (or from $1$ to $\alpha$)  and using (\ref{eq:truesol}) with  boundary data $u(0,t)=u(1, t)=g(t)$, it follows that
\begin{equation}
\label{eq:eq3}
\mathcal{R}(\partial_t\ln y; t)=\mathcal{R}\left(\frac{d\ln g}{dt}; t\right),
\end{equation}
where we adopt the (simplifying) assumption that $u_0(0)=1$ (and subsequently to be consistent, $g(0)=1$).
Now,  the Schwarzian derivative  of a function $w(t)$ may be defined in terms of $\mathcal{R}$ by
\begin{equation}
\mathcal{S}(w; t)=\mathcal{R}\left(\frac{d}{dt} \ln \frac{dw}{dt}; t\right)
\label{eq:eq5}
\end{equation}
and it has the property (\cite{CDO}) that 
$\mathcal{S}(p; t)=\mathcal{S}(q; t)$ if and only if
$p$ and $q$ are related by a fractional linear (M\"{o}ebius) transformation, 
$$q(t)=\frac{ap(t)+b}{cp(t)+d}.$$
It follows from \eqref{eq:eq3} and \eqref{eq:eq5} that if we set $y=\partial_tY$ and 
\begin{equation}
\label{eq:G}
g(t)=\frac{dG}{dt},
\end{equation}
with $Y(\alpha, 0)=G(0)=0$, then
\begin{equation}
Y(\alpha, t)=\frac{a(\alpha)G(t)}{c(\alpha)G(t)+d(\alpha)}
\end{equation}
for some functions $a(\alpha), c(\alpha)$ and $d(\alpha)$. If we next set $\Delta(\alpha)=a(\alpha)d(\alpha)$, we find by successive differentiation that
\begin{equation}
\label{eq:y1}
y(\alpha, t)=\frac{\Delta(\alpha)g(t)}{(c(\alpha)G(t)+d(\alpha))^2}
\end{equation}
and 
\begin{equation}
\label{eq:preceding}
\partial_ty(\alpha, t)=\Delta(\alpha)\frac{(c(\alpha)G(t)+d(\alpha))\dot{g}(t)-2c(\alpha)g^2(t)}{(c(\alpha)G(t)+d(\alpha))^3}.
\end{equation}
Consequently \eqref{eq:p1}ii) and \eqref{eq:y1} imply $\Delta(\alpha)=d^2(\alpha)$. Integrating \eqref{eq:p1}i) now gives
\begin{equation}
\partial_t\ln y(\alpha, t)=\frac{d\ln g}{dt}(t)+\psi(\alpha, t)
\end{equation}
where 
\begin{equation}
\label{eq:psi}
\psi(\alpha, t)=\int_0^\alpha f(z)u_0(z)y(z, t)\,dz=\int_0^\alpha f(z)u(z, t)\,dz,
\end{equation}
 and so at $t=0$,
\begin{equation}
\label{eq:psi0}
\partial_ty(\alpha, 0)=\dot{g}(0)+\psi_0(\alpha)
\end{equation}
where 
\begin{equation}
\label{eq:Psi}
\psi_0(\alpha)=\int_0^\alpha f(z)u_0(z)\,dz. 
\end{equation}

In contrast, equation \eqref{eq:preceding} implies
\begin{equation}
\label{eq:here}
\partial_ty(\alpha, 0)=\Delta (\alpha)\frac{d(\alpha)\dot{g}(0)-2c(\alpha)}{d^3(\alpha)}=\dot{g}(0)-2\frac{c(\alpha)}{d(\alpha)}
\end{equation}
which lets us  combine equations \eqref{eq:y1}, \eqref{eq:psi0} and \eqref{eq:here}   to write $y(\alpha, t)$  in terms of  initial and boundary data as
\begin{equation}
\label{eq:ysoln}
y(\alpha, t)=\frac{g(t)}{(1-\frac{1}{2}\psi_0(\alpha)G(t))^2},
\end{equation}
 giving
\begin{equation}
\label{eq:finalsolution}
u(\alpha, t)=\frac{u_0(\alpha)g(t)}{(1-\frac{1}{2}\psi_0(\alpha)G(t))^2}
\end{equation}
from \eqref{eq:truesol}.

\begin{rem}
We note that if we choose constant boundary data $g(t)\equiv1$  the final solution simplifies to
\begin{equation}
\label{eq:finalsolutionspecial}
u(\alpha,t)=\frac{u_0(\alpha)}{(1-\frac{t}{2}\psi_0(\alpha))^2},
\end{equation} 
which, clearly, will persist for all time if $\psi_0(\alpha)\leq0$, for all $\alpha\in[0,1]$, but will become singular in finite time provided $\psi_0(\alpha)>0$ for some $\alpha\in (0, 1).$
More generally, since $g(t)>0$, monotonicity of $G(t)$ implies  $G(t)\rightarrow G_\infty$ as $t\rightarrow\infty$, where $G_\infty>0$ may, or may not, be bounded. Equation \eqref{eq:finalsolution} then shows solutions persist for all time  if ${max}_{\alpha\in (0, 1)}\psi_0(\alpha)<2/G_\infty$ while finite time blowup takes place if $\psi_0(\alpha)>2/G_\infty$ for some $\alpha\in(0, 1).$ \end{rem}

\section{Basic Properties of Singular Solutions}
\label{sec:singular}

Here we briefly examine some possible types of nonsmooth structure of  solutions from the formula given by (\ref{eq:finalsolution}). If we allow  jumps in $u(\alpha ,t)$ to be defined by
$$
[u(\alpha,\cdot)](t) = \lim_{\tau\downarrow t}u(\alpha, \tau) - \lim_{\tau\uparrow t}u(\alpha, \tau)
$$
and
$$
[u(\cdot , t)](\alpha) = \lim_{\beta\downarrow\alpha}u(\beta, t)- \lim_{\beta\uparrow\alpha}u(\beta ,t),
$$
then  jump discontinuities resulting from jumps in the boundary or initial data functions $g(t)$ or $u_0(\alpha)$, propagate along characteristics (lines of constant $t$ or $\alpha$), since    continuity of the primitive functions $G(t)$ and $\psi_0(\alpha)$  implies
$$
[u(\alpha,\cdot)](t)=\frac{u_0(\alpha)\, [g(\cdot)](t)}{(1-\frac{1}{2}G(t) \psi_0(\alpha))^2}
$$
and
$$
[u(\cdot , t)](\alpha)=\frac{[u_0(\cdot)](\alpha) \, g(t)}{(1-\frac{1}{2}G(t)\psi_0(\alpha))^2}.
$$
On  requiring $u_0(\alpha)$ and $g(t)$ to be positive, jumps in $u(\alpha, t)$, which stem from initial or boundary data,  remain  nonzero  along  corresponding characteristics.

If we next denote the set,\, $\Sigma$\,, by
\begin{equation}
\label{eq:Sigma}
\Sigma = \{(\sigma, \tau)\in(0, 1)\times(0, \infty): \psi_0(\sigma)\,G(\tau)=2\},
\end{equation}

then, to be strictly valid, the solution formula \eqref{eq:finalsolution}  requires that both a vertical and at least one horizontal characteristic  avoid intersecting $\Sigma$ at any point $(\sigma, \tau)$ prior to reaching $(\alpha, t)$, (\cite{kichenassamy}). This is not always possible, but
the  method used to construct    formula \eqref{eq:ysoln} above remains {\em a posteriori} valid at $(\alpha, t)$ 
[using vertical characteristics together with  horizontal characteristics coming either from the left (lines with constant $t> 0$  which meet $\alpha=0$) or from the right (through a similar construction\footnote[1]{For the latter construction one  integrates instead from $1$ to $\alpha$ and reaches the same solution formula as before with $\psi_0(\alpha)$ replaced by $\int_1^\alpha f(\alpha)u_0(\alpha)d\alpha$. Compatibility of initial data with  \eqref{eq:liouville}i) implies $\int_0^1 f(\alpha)u_0(\alpha)d\alpha$ = 0.}   using   periodicity of boundary data and meeting  $\alpha=1$)] if one imposes appropriate conditions on the function $f(\alpha)$.

A  condition sufficient for formula \eqref{eq:finalsolution} to hold  everywhere beneath $\Sigma$  (both before and after solutions begin to develop singularities) is for $\psi_0(\alpha)$ to be positive on only a single, open, connected set in $(0, 1)$
on which it has a single maximum and no  local minima.
If one assumes that $f'(\alpha_0)<0$ wherever $f(\alpha_0)=0$ and $\psi_0(\alpha_0)>0$, then  this suffices   since $\psi'_0(\alpha)=f(\alpha)u_0(\alpha)$ and  $\psi_0(\alpha)$ is consequently convex down  at its extrema.

In order to consider  the properties of  non-characteristic curves in $\Sigma$ further, 
suppose in (\ref{eq:finalsolution}) that the functions $f(\alpha), \, g(t)>0$ and $u_0(\alpha)>0$ are continuous for $\alpha\in (0, 1)$ and  set $\mathcal{F}(\alpha, t)=G(t)\psi_0(\alpha)-2$. Then $\mathcal{F}_t(\alpha, t)\neq 0$  wherever  $\psi_0(\alpha)>0$ and,   by the implicit function theorem,   there exists a unique curve, 
$t=\tilde{t}(\alpha)$, in the local neighborhood of  any point $(\tilde{\alpha}, \tilde{t})$ where $\mathcal{F}(\tilde{\alpha}, \tilde{t})=0$, through which $\mathcal{F}(\alpha, \tilde{t}(\alpha))=0.$
For ${max}_{\alpha\in (0, 1)}\psi_0(\alpha)>2/G_\infty$,\,   $\tilde{t}(\alpha)\in\Sigma$ then lies    
in the region 
\begin{equation}
\label{eq:A}
\mathcal{A}=\{(\alpha, t): \psi_0(\alpha)\geq 0, t\geq 0\}
\end{equation}

and is given  by the  formula
\begin{equation}
\label{eq:t}
\tilde{t}(\alpha)=G^{-1}(2/\psi_0(\alpha)).
\end{equation}

We   define 
\begin{equation}
\label{eq:dA}\partial\mathcal{A}=\{(\alpha, t): \psi_0(\alpha)=0, t> 0\}
\cup\{(\alpha, t):\psi_0(\alpha)>0, \,t=0\}
\end{equation}
and
\begin{equation}
\label{eq:A+-}\mathcal{A}_\pm=\{(\alpha, t)\in\mathcal{A}: f(\alpha)\gtrless 0\}\,\,\mbox{and}\,\,
\mathcal{A}_0=\{(\alpha, t)\in\mathcal{A}: f(\alpha)=0\}.
\end{equation}
On differentiating, $\tilde{t}(\alpha)$ in (\ref{eq:t}) gives, for $\alpha\in (0, 1)$,
\begin{equation}
\label{eq:slope}
\tilde{t}_\alpha(\alpha)=-\frac{2f(\alpha)u_0(\alpha)}{g\circ G^{-1}(2/\psi_0(\alpha))\,\psi_0^2(\alpha)}
\end{equation}
and so the slope of $\tilde{t}(\alpha)$ is negative in $\mathcal{A}_+$, positive in $\mathcal{A}_-$, and  zero in $\mathcal{A}_0$.

We will be interested  subsequently in  singular curves, $t=\tilde{t}(\alpha)$, which may meet the boundary, $\partial\mathcal{A}$.  In general, if $\psi_0(\alpha)$ is continuous on $[0, 1]$ and $G(t)$ is continuous and bounded on $[0, \infty)$, then no curve in $\Sigma$ can meet $\partial\mathcal{A}$.
Points on $\partial\mathcal{A}$ either take the form   $(\alpha_\sharp, t_\sharp\, )$ where $\psi_0(\alpha_\sharp)=0$ and $G(t_\sharp)$ is unbounded, or $(\alpha_\flat, 0)$ where $\psi_0(\alpha_\flat)$ is unbounded and $G(0)=0$. 
 We will let $\mathcal{A}$ take the form of the set  $\mathcal{C}=\{(\alpha, t):\alpha_l\leq\alpha\leq\alpha_r, t\geq 0\}$ where $\psi_0(\alpha)> 0\,\, $ for every
 $\alpha\in(\alpha_l, \alpha_r)$ and $\psi_0(\alpha_l)=\psi_0(\alpha_r)=0$.
By assuming $f(\alpha)>0$ for $\alpha$ close to $0$, we can let $\alpha_l=0$,   with $\alpha_r\leq1$.  $\partial\mathcal{C}$ is defined in an analogous way to $\partial\mathcal{A}$.

Following from the definitions, we have that $\psi_0(0)=G(0)=0$, to which we  add  some further simplifying hypotheses, based on the choice of the coefficient $f(\alpha)$ and the data $u_0(\alpha)$ and $g(t)$, in that leading order behaviour is given by  
$$\mbox{(H1)}\,\,\psi_0\sim  \alpha^{a_0} \mbox{ as }\alpha\downarrow 0, \,\, \psi_0\sim |\alpha-\alpha_r|^{a_r} \mbox{ as }\alpha\uparrow\alpha_r ,
\mbox{ and }G\sim \, t^{b_0} \mbox{ as }t\downarrow 0$$
where $a_0, a_r, b_0 >0$ and the symbol $\sim$ will mean that the quotient of the two sides tends to a positive constant in the limit. Similarly, we will  assume  that in the  limits of $\alpha$ approaching $\alpha_\flat \in(0, \alpha_r)$ or  $t$~approaching $t_\sharp>0$, 
$$\mbox{(H2)}\qquad\qquad \psi_0(\alpha)\sim |\alpha - \alpha_\flat |^a  \mbox{ as }\alpha\rightarrow \tilde{\alpha}\mbox{ or }G(t)\sim |t- t_\sharp |^b \mbox{ as }t\rightarrow t_\sharp$$ 
where $a=a(\alpha_\flat),\, b=b(t_\sharp).$ 
If, under these assumptions, the curve $t=\tilde{t}(\alpha)$ connects to $\partial{\mathcal C}$ at $(\alpha_\flat, 0)$,
then  (\ref{eq:Sigma}) implies that
$$|\tilde{t}(\alpha)|^{b_0}|\alpha-\alpha_\flat |^a\sim c\mbox{ as }\alpha\rightarrow\alpha_\flat $$
for some generic constant, $c>0.$
If  $t=\tilde{t}(\alpha)$ connects to  $\partial{\mathcal C}$  at~$(0, t_\sharp)$, then
$$|t_\sharp-\tilde{t}(\alpha)|^b\alpha^{a_0}\sim c\mbox{ as }\alpha\downarrow 0$$ 
 
As a result, $a_0, b_0 > 0$ require either $b<0$, or $a<0$, for the curve to meet  $\partial\mathcal{C}$ at $t=0$, or $\alpha = 0$, respectively. In the latter case,  one must clearly also have
$$|t_\sharp-\tilde{t}(\alpha)|^b|\alpha-\alpha_r|^{a_r}\sim c \mbox{ as }\alpha\uparrow\alpha_r .$$ 
In the event that  $G(t)$ becomes unbounded only as $t\rightarrow\infty$,  the branches of $\Sigma$ are  asymptotic to $\alpha=0$ and $\alpha=\alpha_r$.

\section{Regularity Results}
\label{sec:regularity}
In this section we are concerned simply with finite-time blowup, or global existence in time, of (\ref{eq:finalsolution}). In \S\ref{subsec:smoothbdry}, we study the interior regularity of solutions that are smooth at the boundary for all time, while, in \S\ref{subsec:singularbdry}, the case of non-smooth $g(t)>0$ is considered. More particularly, for the former we establish criteria in terms of the sign-changing function $f$ and initial data $u_0$ leading to finite-time blowup or global-in-time solutions. Then, in \S\ref{subsec:singularbdry}, we examine the effects on the interior regularity of $u$ of boundary data $g(t)$ having a particular singular form, specifically, for some $0<t_b<+\infty$, $g(t)$ is taken to be smooth on $t\in[0,t_b)$ but $\lim_{t\uparrow t_b}g(t)=+\infty$. In this case, we find that under certain conditions the solution $u$ can in fact diverge somewhere in the interior at a time $0<t_*<t_b$.

First we define some terminology. Let $M_0$ denote the greatest value attained by $\psi_0(\alpha)$ at a finite number of locations $\overline\alpha_i\in[0,1],\, 1\leq i\leq n$, namely
\begin{equation}
\label{eq:M0}
M_0\equiv\max_{\alpha\in[0,1]}\psi_0(\alpha)=\psi_0(\overline\alpha_i),\qquad\quad 1\leq i\leq n.
\end{equation}
Notice that, since $\psi_0(0)=\psi_0(1)=0$, $M_0\in\mathbb{R}^+\cup\{0\}$.

\subsection{Smooth Boundary Data}
\label{subsec:smoothbdry}

Suppose $u_0(\alpha)>0$ and $f(\alpha)$ are bounded continuous functions for all $\alpha\in[0,1]$, and the boundary data $g(t)>0$ is smooth. In this section we examine $L^{\infty}(0,1)$ regularity of (\ref{eq:finalsolution}). We begin by establishing simple criteria leading to global-in-time solutions.

\subsubsection{Global-in-time Solutions}
\label{subsubsec:globalsmoothbdry}
\hfill

\noindent
Note that a solution to (\ref{eq:liouville}) will persist for all time as long as (\ref{eq:finalsolution}) remains both finite and positive for all $0<t<+\infty$. Suppose $u_0$ and $f$ are such that $M_0=0$, that is $\psi_0(\alpha)\leq0$ for all $\alpha\in[0,1]$. This implies that $1-\frac{1}{2}G(t)\psi_0(\alpha)\geq1$ or, from (\ref{eq:finalsolution}),
\begin{equation}
\label{eq:global1}
0<u(\alpha,t)\leq g(t)u_0(\alpha).
\end{equation}
Since $g(t)$ is smooth for all time and $u_0(\alpha)\in L^{\infty}[0,1]$, $u$ remains finite for all $\alpha\in[0,1]$ and $0<t<+\infty$. This leads to Theorem \ref{thm:globaltheorem} below.

\begin{thm}
\label{thm:globaltheorem}
Consider the initial boundary value problem (\ref{eq:liouville}) for smooth boundary data $g(t)>0$. Suppose both the initial data $u_0$ and sign-changing function $f$ are continuous and $u_0(\alpha)\in L^{\infty}(0,1)$. If $u_0$ and $f$ are such that $\psi_0(\alpha)\leq0$ for all $\alpha\in[0,1]$, then $0<\left\|u(\cdot,t)\right\|_{\infty}<+\infty$ for all time. Moreover, the result still holds in the case where $M_0>0$ as long as $g$ is such that 
\begin{equation}
\label{eq:noblowcond}
\lim_{t\to+\infty}G(t)<\frac{2}{M_0}.
\end{equation}
\end{thm}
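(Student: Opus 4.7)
The plan is to deduce everything directly from the representation formula \eqref{eq:finalsolution}, after observing that the assumptions force the denominator to stay bounded away from zero uniformly in $\alpha$ and $t$. The central structural fact to record first is that $G(t)=\int_0^t g(s)\,ds$ is strictly increasing on $[0,\infty)$, since $G'(t)=g(t)>0$; in particular $G$ has a (possibly infinite) monotone limit $G_\infty=\lim_{t\to\infty}G(t)\in(0,+\infty]$, and $0\le G(t)\le G_\infty$ for all $t\ge 0$.

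For the first claim ($M_0=0$, i.e.\ $\psi_0(\alpha)\le 0$ on $[0,1]$), I would simply note that $-\tfrac12 G(t)\psi_0(\alpha)\ge 0$, so the denominator in \eqref{eq:finalsolution} satisfies $(1-\tfrac12 G(t)\psi_0(\alpha))^2\ge 1$; combined with $u_0>0$ and $g>0$ this gives
\begin{equation*}
0<u(\alpha,t)\le g(t)\,u_0(\alpha),
\end{equation*}
which is the bound \eqref{eq:global1} already flagged before the theorem. Since $g(t)$ is finite for each $t\ge 0$ and $u_0\in L^\infty(0,1)$, the positivity and $L^\infty$ finiteness of $u(\cdot,t)$ follow for every $t$.

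For the second claim ($M_0>0$ together with \eqref{eq:noblowcond}), I would use monotonicity of $G$ to upgrade the pointwise hypothesis $G_\infty<2/M_0$ into a uniform estimate. For every $(\alpha,t)\in[0,1]\times[0,\infty)$,
\begin{equation*}
\tfrac12\,\psi_0(\alpha)\,G(t)\;\le\;\tfrac12\,M_0\,G_\infty\;<\;1,
\end{equation*}
and hence $1-\tfrac12\psi_0(\alpha)G(t)\ge 1-\tfrac12 M_0 G_\infty=:\delta>0$ uniformly in $(\alpha,t)$. Substituting this lower bound into the denominator of \eqref{eq:finalsolution} gives
\begin{equation*}
0\;<\;u(\alpha,t)\;\le\;\frac{g(t)\,\|u_0\|_{\infty}}{\delta^{2}}\;<\;+\infty,
\end{equation*}
which delivers both positivity and $L^\infty$ finiteness for all $t>0$.

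There is no serious obstacle in this argument; it is really a corollary of \eqref{eq:finalsolution}. The only point worth being careful about is to observe that monotonicity of $G$ is what permits the asymptotic hypothesis $G_\infty<2/M_0$ to be transformed into the uniform bound $G(t)\le G_\infty$ needed to keep the denominator bounded below on the entire time axis; without this monotonicity one would only control large $t$, not $t$ in a bounded range. Once this is in hand, continuity of $\psi_0$ on the compact interval $[0,1]$ (guaranteeing that $M_0$ is genuinely attained at finitely many points $\overline{\alpha}_i$, as in \eqref{eq:M0}) confirms that the suprema used above are meaningful.
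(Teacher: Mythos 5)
Your argument is correct and coincides with the paper's own reasoning: the paper establishes the theorem via exactly the same denominator bound $1-\tfrac12 G(t)\psi_0(\alpha)\ge 1$ leading to \eqref{eq:global1} in the case $M_0=0$, and the case $M_0>0$ with \eqref{eq:noblowcond} follows from the same monotonicity of $G$ that you invoke (cf.\ the remark after \eqref{eq:finalsolutionspecial}). Nothing is missing.
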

\begin{rem}
For $f(\alpha),u_0(\alpha)\in C^1[0,1]$, (\ref{eq:Psi}) implies that a sufficient condition for global solutions is that, for all $\alpha\in[0,1]$,
\begin{equation}
\label{eq:globalcond1}
f(\alpha)u_0'(\alpha)+f'(\alpha)u_0(\alpha)>0.
\end{equation}
Indeed, the above is equivalent to $\psi_0''>0$, which by $\psi_0(0)=\psi_0(1)=0$ implies that $M_0=0$. The reader may refer to \S\ref{sec:examples} where an example of a global solution is obtained for the case $f(\alpha)=2\alpha-1$ and $u_0(\alpha)\equiv1$. Also in \S\ref{sec:examples} we discuss a simple class of boundary data (a family of fractional linear maps) for which global solutions may be obtained for any choice of $u_0$ and $f$.
\end{rem}

\subsubsection{Finite-time $L^{\infty}$ Blowup}
\label{subsec:blowupsmoothbdry}
\hfill

\noindent
We now study finite-time blowup of solutions to \eqref{eq:liouville}. For smooth boundary data $g(t)>0$, suppose $u_0$ and $f$ are such that $M_0\in\mathbb{R}^+$, and assume $g$ is such that \eqref{eq:G} satisfies
\begin{equation}
\label{eq:blowcond}
\frac{2}{M_0}<\lim_{t\to+\infty}G(t)\leq +\infty.
\end{equation}
Since $g>0$ and $\dot G(t)=g(t)$, then by continuity \eqref{eq:blowcond} implies the existence of a finite $t_*>0$ such that 
\begin{equation}
\label{eq:G*}
G(t_*)=\lim_{t\uparrow t_*}\int_0^{t}{g(s)\,ds}=\frac{2}{M_0}.
\end{equation}
More particularly, from \eqref{eq:finalsolution} we see that
\begin{equation}
\label{eq:blow1}
\lim_{t\uparrow t_*}u(\overline\alpha_i,t)=+\infty,\qquad\quad 1\leq i\leq n
\end{equation}
for
\begin{equation}
\label{eq:t*}
t_*\equiv G^{-1}\left(2/M_0\right),
\end{equation}
and where $\overline\alpha_i$ denote the finite number of locations where $M_0$ is attained. On the contrary, if $\alpha\neq\overline\alpha_i$ (so that $\psi_0(\alpha)<M_0$), $u((\alpha,t)$ will converge, as $t\uparrow t_*$, to a finite positive constant $C(\alpha)$ given by
\begin{equation}
\label{eq:blow2}
C(\alpha)=g(t_*)u_0(\alpha)\left(1-\frac{\psi_0(\alpha)}{M_0}\right)^{-2}.
\end{equation}
We summarize the above results in Theorem \ref{thm:blowuptheorem} below.
\begin{thm}
\label{thm:blowuptheorem}
Consider the initial boundary value problem (\ref{eq:liouville}) for smooth boundary data $g(t)>0$. Suppose both the initial data $u_0$ and sign-changing function $f$ are continuous and $u_0(\alpha)\in L^{\infty}(0,1)$. If $u_0$ and $f$ are such that $\psi_0$ attains its greatest value $M_0>0$ at a finite number of points $\overline\alpha_i\in(0,1),\, 1\leq i\leq n$, and if $g$ is such that \eqref{eq:blowcond} holds, then there exists a finite $t_*>0$ (given by \eqref{eq:t*}) at which $u(\overline\alpha_i,t)$ diverges as $t\uparrow t_*$. In contrast, for $\alpha\neq\overline\alpha_i$\,, $u(\alpha,t)$ converges to the finite, positive constant in (\ref{eq:blow2}).
\end{thm}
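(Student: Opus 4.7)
The plan is to read Theorem \ref{thm:blowuptheorem} off from the representation formula \eqref{eq:finalsolution} together with a monotonicity argument for $G(t)$. Since $g(t)>0$, the primitive $G(t)=\int_0^t g(s)\,ds$ is continuous, strictly increasing, and satisfies $G(0)=0$. Combined with assumption \eqref{eq:blowcond}, the intermediate value theorem yields a unique $t_*\in(0,\infty)$ with $G(t_*)=2/M_0$; invertibility of $G$ on a neighbourhood of this point then produces the explicit expression $t_*=G^{-1}(2/M_0)$ recorded in \eqref{eq:t*}.

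Next, I would insert $\alpha=\overline\alpha_i$ into \eqref{eq:finalsolution}. By the definition of $M_0$, $\psi_0(\overline\alpha_i)=M_0$, so as $t\uparrow t_*$ the denominator $\bigl(1-\frac{1}{2}M_0 G(t)\bigr)^2$ vanishes, while the numerator $u_0(\overline\alpha_i)\,g(t)$ stays bounded below by a positive constant, since $u_0(\overline\alpha_i)>0$ and $g(t)$ is continuous and positive at $t_*$. This forces $u(\overline\alpha_i,t)\to+\infty$ and establishes \eqref{eq:blow1}. For $\alpha\neq\overline\alpha_i$ one has $\psi_0(\alpha)<M_0$ strictly, so
\[
\Bigl(1-\frac{1}{2}\psi_0(\alpha)G(t_*)\Bigr)^2=\Bigl(1-\frac{\psi_0(\alpha)}{M_0}\Bigr)^2>0,
\]
and substituting into \eqref{eq:finalsolution}, together with the continuity of $u_0$ and $g$, delivers the limit \eqref{eq:blow2}.

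The one step that requires attention is confirming that the representation formula is genuinely valid on the whole slab $[0,1]\times[0,t_*)$, so that no earlier singularity is missed. Because $G$ is strictly increasing, $t<t_*$ implies $G(t)<2/M_0$, and since $\psi_0(\alpha)\leq M_0$ by the definition of the maximum, we obtain $\psi_0(\alpha)G(t)<2$ uniformly in $\alpha$; hence the singular set $\Sigma$ of \eqref{eq:Sigma} is not reached before $t_*$. The \emph{a posteriori} construction leading to \eqref{eq:ysoln} in \S\ref{sec:solution} is therefore legitimate throughout this region, and $t_*$ is genuinely the first blowup time. I expect this validity check to be the only nontrivial point in the argument; the rest is direct inspection of \eqref{eq:finalsolution}.
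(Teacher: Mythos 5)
Your proposal is correct and follows essentially the same route as the paper: use the positivity of $g$ and continuity of $G$ together with \eqref{eq:blowcond} to locate $t_*=G^{-1}(2/M_0)$, then read the divergence at $\overline\alpha_i$ and the limit \eqref{eq:blow2} elsewhere directly off the representation formula \eqref{eq:finalsolution}. Your added check that $\psi_0(\alpha)G(t)<2$ for all $\alpha$ and $t<t_*$, so that the singular set $\Sigma$ is not met before $t_*$, is a point the paper handles separately in \S\ref{sec:singular} rather than inside this proof, but it is consistent with the paper's argument.
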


\begin{rem}
\label{rem:suff}
If there exist\, $0<\alpha_0<\alpha_1\leq\alpha_2\leq1$ such that 
\begin{equation}
\label{suffblow1}
f(\alpha)u_0'(\alpha)\leq0,\quad f(\alpha_0)=0,\qquad \alpha\in[0,\alpha_1]
\end{equation}
and
\begin{equation}
\label{suffblow2}
(f(\alpha)u_0(\alpha))'\geq0,\quad f(\alpha_2)=0,\qquad \alpha\in[\alpha_1,1],
\end{equation}
then $M_0>0$. Consequently, \eqref{suffblow1}-\eqref{suffblow2} give a sufficient condition for $u(\overline\alpha_i,t)$ to blowup in finite time as long as \eqref{eq:blowcond} holds.
See \S\ref{sec:examples} for particular examples. 
\end{rem}

\subsection{Singular Boundary Data}
\label{subsec:singularbdry}
In this section we study the effects of singular boundary data on the interior regularity of (\ref{eq:finalsolution}). More particularly, suppose (\ref{eq:G}) has the form
\begin{equation}
\label{eq:Gsingular}
G(t)=\frac{1}{\beta}\left(\frac{1}{(1-t)^{\beta}}-1\right),\,\,\,\,\,\,\,\,\,\,\,\,\,\,\,\beta>0,
\end{equation}
so that
\begin{equation}
\label{eq:blowcond2}
\lim_{t\uparrow 1}G(t)=+\infty.
\end{equation}
Since $\dot G(t)=g(t)$, (\ref{eq:Gsingular}) then yields
\begin{equation}
\label{eq:gsingular}
g(t)=\frac{1}{(1-t)^{1+\beta}},\,\,\,\,\,\,\,\,\,\,\,\,\,\,\,\beta>0
\end{equation}
as the induced boundary blowup rate with boundary blowup time\footnote[2]{A simple rescaling argument shows that the choice $t_b=1$ is without loss of generality.}
\begin{equation}
\label{eq:tbdry}
t_b=1.
\end{equation}
We find that if $u_0$ and $f$ are such that $M_0>0$ (e.g. they satisfy the conditions in Remark \ref{rem:suff} above), then for all $\beta>0$, $u(\overline\alpha_i,t)$ blows up at a finite time $t_*$ satisfying $0<t_*<t_b$; whereas, for $\alpha\neq\overline\alpha_i$, $u$ remains bounded and positive. 

For the case $M_0=0$, define
\begin{equation}
\label{eq:omega}
\Omega\equiv\{\alpha\in[0,1]\,\,|\,\,\psi_0(\alpha)=0\},
\end{equation}
which note satisfies $\Omega\neq\emptyset$ due to $\psi_0(0)=\psi_0(1)=0$. Suppose $M_0=0$, so that $\Omega=\{\overline\alpha_i\}_{i=1}^n$. If $\alpha\in\Omega$, the induced boundary blowup time $t_b$ will represent the earliest blowup time for $u$; while, for $\alpha\notin\Omega$ (i.e. $\alpha\neq\overline\alpha_i$), the induced boundary blowup rate determines the behaviour of the solution (as well as its last configuration profile before blowup) as follows: If $\beta=1$, $u$ will stay both \textsl{finite} and \textsl{positive} as $t\uparrow t_b$, whereas, for $\beta\in(1,+\infty)$ or $\beta\in(0,1)$, $u$ will vanish or respectively diverge to $+\infty$ as $t\uparrow t_b$. Consequently, in the case $M_0=0$, we may refer to $\beta=1$ as a ``threshold'' exponent for the boundary singularity due to the drastic change in the last configuration profile of $u(\alpha,t)$ before blowup when $\beta=1\pm\epsilon$ for $\epsilon>0$ arbitrarily small. It turns out that the case $\beta=1$ corresponds to  \eqref{eq:Gsingular} being fractional linear, so that its Schwarzian derivative vanishes identically, i.e. $\mathcal{S}(G;t)\equiv0$. 

Now, using (\ref{eq:Gsingular}) and (\ref{eq:gsingular}) on (\ref{eq:finalsolution}), we obtain
\begin{equation}
\label{eq:usingular1.1}
u(\alpha,t)=\frac{4u_0(\alpha)}{\left(2-t(2+\psi_0(\alpha))\right)^2},\,\,\,\,\,\,\,\,\,\,\,\,\,\,\,\,\,\,\,\,\,\,\,\,\,\,\beta=1
\end{equation}
or
\begin{equation}
\label{eq:usingular1}
u(\alpha,t)=\frac{4\beta^2u_0(\alpha)\,(1-t)^{\beta-1}}{\left(2\beta(1-t)^{\beta}-\psi_0(\alpha)(1-(1-t)^{\beta})\right)^2},\quad\quad\beta\in\mathbb{R}^+\backslash\{1\},
\end{equation}
both of which imply 
\begin{equation}
\label{eq:int2}
u(\alpha,t)=\frac{u_0(\alpha)}{(1-t)^{1+\beta}},\,\,\,\,\,\,\,\,\,\,\,\,\,\,\,\,\,\,\,\alpha\in\Omega,\,\,\,\,\,\,\,\beta\in\mathbb{R}^+,
\end{equation}
so that $u$ diverges on $\Omega$ as $t$ approaches the induced boundary blowup time $t_b=1$. However, below we see how under certain conditions, $u$ may still diverge at an earlier time somewhere on $[0,1]\backslash\Omega$. 

\subsubsection{Boundary Singularity with $\beta=1$}
\hfill

\noindent
First we consider the simple case (\ref{eq:usingular1.1}), which corresponds to an induced boundary singularity of the form (\ref{eq:gsingular}) with $\beta=1$ . Since the earliest time $t_*$, satisfying $0<t_*\leq t_b$, for which  (\ref{eq:usingular1.1}) diverges is obtained from  
\begin{equation}
\label{eq:tsingular1}
t(\alpha)=\frac{2}{2+\psi_0(\alpha)},
\end{equation}
we find that
\begin{equation}
\label{eq:tsingular11}
t_*=
\begin{cases}
t_b=1,\,\,\,\,\,\,\,\,\,\,\,\,\,\,\,\,\,\,\,\,\,\,\,\,\,\,\,&M_0=0,
\\
\frac{2}{2+M_0}<t_b,\,\,\,\,\,\,\,\,&M_0>0.
\end{cases}
\end{equation}
Using the above on (\ref{eq:usingular1.1}) leads to the following result.

\begin{thm}
\label{thm:singular1}
Consider the initial boundary value problem (\ref{eq:liouville}) for  initial data $u_0(\alpha)$ and sign-changing function $f(\alpha)$ both continuous and bounded. For $\beta=1$, suppose the boundary data $g(t)$ has the singular form (\ref{eq:gsingular}) with prescribed boundary blowup time $t_b=1$. Then for $u_0$ and $f$ such that $M_0>0$ (see e.g. \eqref{suffblow1}-\eqref{suffblow2}), there exists $0<t_*<t_b$, given by (\ref{eq:tsingular11})ii), such that
\begin{equation}
\label{eq:usingular11}
\lim_{t\uparrow t_*}u(\alpha,t)=
\begin{cases}
+\infty,\,\,\,\,\,\,\,\,\,\,\,\,\,&\alpha=\overline\alpha_i,
\\
u_0(\alpha)\left(\frac{2+M_0}{M_0-\psi_0(\alpha)}\right)^2,\,\,\,\,\,\,\,\,&\alpha\neq\overline\alpha_i.
\end{cases}
\end{equation}
On the contrary, if $u_0$ and $f$ are so that $M_0=0$, then the earliest blowup time for $u$ is the induced boundary blowup time $t_b$, and
\begin{equation}
\label{eq:usingular12}
\lim_{t\uparrow t_b}u(\alpha,t)=
\begin{cases}
+\infty,\,\,\,\,\,\,\,\,\,\,\,\,\,&\alpha=\overline\alpha_i,
\\
\frac{4u_0(\alpha)}{\psi_0(\alpha)^2},\,\,\,\,\,\,\,\,&\alpha\neq\overline\alpha_i.
\end{cases}
\end{equation}
\end{thm}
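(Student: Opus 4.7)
The whole statement follows by a direct analysis of the explicit representation formula \eqref{eq:usingular1.1}, which is already in hand from the previous section; no further PDE machinery is required. The plan is to read off blow-up behavior from the zero set of the denominator
\begin{equation*}
D(\alpha,t) := 2 - t\bigl(2 + \psi_0(\alpha)\bigr),
\end{equation*}
and then to evaluate the pointwise limits.

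First I would identify the candidate blow-up time $t(\alpha)$ solving $D(\alpha,t)=0$, namely \eqref{eq:tsingular1}, observing that $t(\alpha)$ is well defined and positive whenever $2+\psi_0(\alpha)>0$, which is automatic since $\psi_0$ is continuous on $[0,1]$ with $\psi_0(0)=0$ (so we may restrict attention to the range where $\psi_0(\alpha)>-2$, which covers the maximizing points $\overline{\alpha}_i$). The earliest blow-up time is then $t_\ast = \min_{\alpha\in[0,1]} t(\alpha)$, and since $t(\alpha)$ is a decreasing function of $\psi_0(\alpha)$, the minimum is achieved exactly at the set where $\psi_0$ attains its maximum $M_0$. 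This produces the dichotomy \eqref{eq:tsingular11}: if $M_0>0$ then $t_\ast=2/(2+M_0)<1=t_b$, and if $M_0=0$ then $t_\ast=1=t_b$.

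Next I would evaluate the limits in \eqref{eq:usingular1.1} as $t\uparrow t_\ast$. At any $\alpha=\overline{\alpha}_i$ (so $\psi_0(\alpha)=M_0$) we have $D(\overline{\alpha}_i,t_\ast)=0$, and since the numerator $4u_0(\overline{\alpha}_i)$ is strictly positive by the standing hypothesis $u_0>0$, we obtain $u(\overline{\alpha}_i,t)\to+\infty$. For $\alpha\neq\overline{\alpha}_i$, strict inequality $\psi_0(\alpha)<M_0$ gives $D(\alpha,t_\ast)>0$, so $u(\alpha,t_\ast)$ is a finite positive number; substituting $t_\ast=2/(2+M_0)$ and simplifying $D(\alpha,t_\ast)=2(M_0-\psi_0(\alpha))/(2+M_0)$ yields the explicit value in \eqref{eq:usingular11}ii). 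The $M_0=0$ case is handled by the same computation with $t_\ast=1$: at $\alpha=\overline{\alpha}_i$ one has $D(\overline{\alpha}_i,1)=0$ while for $\alpha\notin\Omega$ one has $D(\alpha,1)=-\psi_0(\alpha)>0$ (since $\psi_0\leq 0$), giving the finite limit $4u_0(\alpha)/\psi_0(\alpha)^2$ of \eqref{eq:usingular12}.

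There is essentially no hard step; the only thing worth pausing over is confirming that $t_\ast$ is in fact the earliest time the solution can become singular (rather than merely a time at which the denominator vanishes for some $\alpha$). For this I would argue that on the strip $[0,1]\times[0,t_\ast)$, the function $D(\alpha,t)$ is strictly positive for every $\alpha$: indeed $D$ is continuous, $D(\alpha,0)=2>0$, and $D(\alpha,t)>0$ iff $t<t(\alpha)$, with $\inf_\alpha t(\alpha)=t_\ast$ by the analysis above. Hence \eqref{eq:usingular1.1} defines a classical solution up to time $t_\ast$, and the limits computed there are genuinely the first singular behavior of $u$.
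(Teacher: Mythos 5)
Your proposal is correct and follows essentially the same route as the paper: both read off the candidate singular time $t(\alpha)=2/(2+\psi_0(\alpha))$ from the vanishing of the denominator in \eqref{eq:usingular1.1}, minimize over $\alpha$ (using that $t(\alpha)$ decreases in $\psi_0(\alpha)$) to obtain \eqref{eq:tsingular11}, and then evaluate the pointwise limits at $t_*$. Your closing verification that the denominator remains positive on $[0,1]\times[0,t_*)$ is a minor addition that the paper leaves implicit.
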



\subsubsection{Boundary Singularity with $\beta\in\mathbb{R}^+\backslash\{1\}$}
\hfill

\noindent
Next we examine the instance (\ref{eq:usingular1}), which corresponds to singular boundary data of the form (\ref{eq:gsingular}) for $\beta\in\mathbb{R}^+\backslash\{1\}$. Here the earliest time $0<t_*\leq t_b$ for which (\ref{eq:usingular1}) diverges is obtained from  
\begin{equation}
\label{eq:tsingular25}
(1-t)^{\beta}=\frac{\psi_0(\alpha)}{2\beta+\psi_0(\alpha)}.
\end{equation}
This yields
\begin{equation}
\label{eq:tsingular2}
t(\alpha)=1-\left(\frac{\psi_0(\alpha)}{2\beta+\psi_0(\alpha)}\right)^{\frac{1}{\beta}}
\end{equation}
where, since we are only interested in times $0<t\leq t_b=1$, then $\alpha\in[0,1]$ in (\ref{eq:tsingular2}) must be such that 
$$\frac{\psi_0(\alpha)}{2\beta+\psi_0(\alpha)}\geq0;$$
otherwise the denominator in (\ref{eq:usingular1}) either never vanishes or does so at a time greater than $t_b$. 

As in the case $\beta=1$, (\ref{eq:tsingular2}) implies that $t_*=t_b=1$ whenever $\alpha\in\Omega$. Moreover, for $u_0$ and $f$ such that $M_0>0$, finite-time blowup occurs only in the interior (since $\psi_0(0)=\psi_0(1)=0$), more particularly,
\begin{equation} 
\label{eq:usingular21}
\lim_{t\uparrow t_*}u(\alpha,t)=
\begin{cases}
+\infty,\,\,\,\,\,\,\,\,\,\,\,\,\,\,\,\,\,\,\,&\alpha=\overline\alpha_i,
\\
C(\alpha),\,\,\,\,\,\,\,\,&\alpha\neq\overline\alpha_i
\end{cases}
\end{equation}
for positive constants $C(\alpha)$ given by
$$C(\alpha)=\frac{u_0(\alpha)(2\beta+M_0)^{1+\frac{1}{\beta}}M_0^{1-\frac{1}{\beta}}}{(M_0-\psi_0(\alpha))^2}$$
and $t_*>0$ satisfying
\begin{equation}
\label{eq:tsingular221}
t_*=1-\left(\frac{M_0}{2\beta+M_0}\right)^{\frac{1}{\beta}}<t_b=1.
\end{equation}
Lastly, if $u_0$ and $f$ are such that $M_0=0$, then for $\alpha\in\Omega=\{\overline\alpha_i\}_{i=1}^n$, (\ref{eq:int2}) holds and 
\begin{equation}
\label{eq:usingular00}
\lim_{t\uparrow t_b}u(\overline\alpha_i,t)\to+\infty,\,\,\,\,\,\,\,\,\,\,\,\,\,\,\,\,\,\,\,\beta\in\mathbb{R}^+\backslash\{1\}.
\end{equation}
In this case the boundary blowup time $t_b=1$ is also the earliest blowup time; however, for $\alpha\notin\Omega$, the behaviour of the solution varies relative to the value of $\beta$. Indeed, for $\alpha\not\in\Omega$ we have that $\psi_0(\alpha)<0$, and either $2\beta+\psi_0(\alpha)<0$ or $2\beta+\psi_0(\alpha)>0$. If the former, (\ref{eq:tsingular2}) yields $t(\alpha)>t_b$, which implies that the denominator of (\ref{eq:usingular1}) remains positive and finite for all $0<t\leq t_b$. As a result, the time-dependent term in the numerator gives 
\begin{equation}
\label{eq:usingular22}
\lim_{t\uparrow t_b}u(\alpha,t)=
\begin{cases}
0,\,\,\,\,\,\,\,\,\,\,\,\,\,\,\,\,\,\,\,&\beta\in(1,+\infty),\,\,\,\, \alpha\notin\Omega,
\\
+\infty,\,\,\,\,\,\,\,\,&\beta\in(0,1),\,\,\quad\,\, \alpha\notin\Omega.
\end{cases}
\end{equation}
If instead $2\beta+\psi_0(\alpha)>0$, so that $\frac{\psi_0}{2\beta+\psi_0}<0$, we use (\ref{eq:tsingular25}) and (\ref{eq:tsingular2}), as well as an argument similar to the one above, to show that $u$ diverges according to (\ref{eq:usingular00}) and (\ref{eq:usingular22}) above. We summarize these results in Theorem \ref{thm:singular2} below.

\begin{thm}
\label{thm:singular2}
Consider the initial boundary value problem (\ref{eq:liouville}) for continuous initial data $u_0(\alpha)\in L^{\infty}(0,1)$ and continuous sign-changing function $f(\alpha)$. Assume the boundary data $g(t)$ has the singular form (\ref{eq:gsingular}) for $\beta\in\mathbb{R}^+\backslash\{1\}$ and let $0<t_b<+\infty$ denote its induced blowup time. If $u_0$ and $f$ are such that $M_0>0$ (see e.g. \eqref{suffblow1}-\eqref{suffblow2}), then there exists $0<t_*<t_b$, given by (\ref{eq:tsingular221}), such that (\ref{eq:usingular21}) holds. In contrast, if $u_0$ and $f$ are so that $M_0=0$, then $u$ diverges according to (\ref{eq:usingular00})-(\ref{eq:usingular22}) as $t\uparrow t_*=t_b$ .
\end{thm}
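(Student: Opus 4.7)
The plan is to derive everything directly from the closed-form expression \eqref{eq:usingular1}. Writing $s = (1-t)^{\beta} \in (0,1]$, I would rewrite the denominator as the affine (in $s$) quantity
\[
h(s,\alpha) = (2\beta+\psi_0(\alpha))\,s - \psi_0(\alpha),
\]
so that $u(\alpha,t) = 4\beta^{2} u_0(\alpha)(1-t)^{\beta-1}/h(s,\alpha)^{2}$. The qualitative behaviour of $u$ as $t\uparrow t_b = 1$ then reduces to two clean subproblems: locate the zeros of $h(\cdot,\alpha)$ in $[0,1]$, and track the numerator factor $(1-t)^{\beta-1}$.

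For the case $M_0 > 0$, the unique real zero of $h(\cdot,\alpha)$ is $s(\alpha) = \psi_0(\alpha)/(2\beta+\psi_0(\alpha))$, and since $\sigma \mapsto \sigma/(2\beta+\sigma)$ is strictly increasing on $[0,\infty)$, this zero lies furthest to the right (hence corresponds to the smallest $t$ via $t(\alpha) = 1 - s(\alpha)^{1/\beta}$) precisely when $\psi_0(\alpha) = M_0$. This yields the formula for $t_*$ given in \eqref{eq:tsingular221}, and the inequality $t_* < t_b$ follows from $M_0/(2\beta+M_0) < 1$. At $(\overline\alpha_i,t_*)$ the denominator vanishes while the numerator remains positive and finite, proving the blow-up. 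For $\alpha \neq \overline\alpha_i$, substituting $s = M_0/(2\beta+M_0)$ into $h$ gives $h = 2\beta(M_0 - \psi_0(\alpha))/(2\beta+M_0)$, and a routine consolidation of the remaining exponents of $M_0$ and $2\beta+M_0$ recovers the explicit $C(\alpha)$.

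For $M_0 = 0$, the identity \eqref{eq:int2} handles $\alpha \in \Omega$ at once, producing \eqref{eq:usingular00}. For $\alpha \notin \Omega$ one has $\psi_0(\alpha) < 0$, and I would observe that
\[
h(0,\alpha) = -\psi_0(\alpha) > 0, \qquad h(1,\alpha) = 2\beta > 0.
\]
Since $h$ is affine in $s$, positivity at both endpoints forces $h > 0$ throughout $[0,1]$, regardless of the sign of $2\beta + \psi_0(\alpha)$. Hence the denominator of $u$ stays bounded away from zero on $(0,t_b]$ and the limit $t \uparrow 1$ is governed entirely by $(1-t)^{\beta-1}$, which vanishes for $\beta > 1$ and diverges for $\beta < 1$, yielding \eqref{eq:usingular22}.

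The only step I expect to require a little care is the nonvanishing of the denominator in the $M_0 = 0$ case. A naive case analysis would split on the sign of $2\beta + \psi_0(\alpha)$ and on whether $|\psi_0(\alpha)|$ exceeds $2\beta$, but the affine-in-$s$ viewpoint collapses all of this into the single two-point positivity check above. Everything else is bookkeeping with the closed-form expression.
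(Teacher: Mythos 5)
Your proposal is correct and follows essentially the same route as the paper: both arguments read the theorem off the closed-form expression \eqref{eq:usingular1} by locating the zeros of the denominator via \eqref{eq:tsingular25}--\eqref{eq:tsingular2} and then tracking the factor $(1-t)^{\beta-1}$. Your affine-in-$s$ substitution with the two-endpoint positivity check is a tidy consolidation of the paper's subcase split on the sign of $2\beta+\psi_0(\alpha)$ (and it silently covers the degenerate case $2\beta+\psi_0(\alpha)=0$), but it is a streamlining of the same computation rather than a different argument.
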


The reader may refer to \S\ref{sec:examples} for specific examples.

\begin{rem}
From (\ref{eq:usingular00}) and (\ref{eq:usingular22}), both of which correspond to the case $M_0=0$, note that for $\beta\in(0,1)$, $u(\alpha,t)\to+\infty$ as $t\uparrow t_b$ everywhere in $[0,1]$. However, the singularity for $\alpha\in\Omega$ is ``stronger'' in the sense that, for $t_b-t>0$ small and $C\in\mathbb{R}^+$,
$$\frac{u(\alpha,t)}{u(\overline\alpha_i,t)}\sim C(t_b-t)^{2\beta},\,\,\,\,\,\,\,\,\,\,\,\,\,\,\,\,\,\,\,\alpha\neq\overline\alpha_i,$$
which vanishes as $t\uparrow t_b$.
\end{rem}

\subsection{Further $L^p(0,1)$ Regularity Results}
\label{subsec:lpsmooth}
In Theorem \ref{thm:blowuptheorem} of \S \ref{subsec:blowupsmoothbdry}, we established simple criteria, in terms of the initial and boundary data, as well as the sign-changing function $f$, for the existence of solutions to (\ref{eq:liouville}) which blowup in finite time in the $L^{\infty}(0,1)$ norm. In this section, we show that for $f$ in a large class of both smooth and non-smooth functions, $\left\|u(\cdot,t)\right\|_p\to+\infty$ as $t\uparrow t_*$ for all $1\leq p<+\infty$. More particularly, suppose $u_0$ and $f$ are such that $\psi_0(\alpha)$ attains a greatest positive value $M_0$ somewhere in $(0,1)$ and let $g(t)\in C^0[0,t_*]$ be such that \eqref{eq:blowcond} holds. Recall that $t_*>0$ denotes the finite $L^{\infty}(0,1)$ blowup time for $u$ satisfying \eqref{eq:G*}.
Now suppose there is $q\in\mathbb{R}^+$, $C_1\in\mathbb{R}^-$ and $r>0$ small, such that
\begin{equation}
\label{eq:local}
\psi_0(\alpha)\sim M_0+C_1\left|\alpha-\overline\alpha\right|^q
\end{equation}
for $0\leq\left|\alpha-\overline\alpha\right|\leq r$. To simplify subsequent arguments, we will assume that $M_0>0$ occurs at a single location $\overline\alpha\in(0,1)$. Moreover, in (\ref{eq:local}) we use the notation  
\begin{equation}
\label{eq:simexplanation}
k(\alpha) \sim L + h(\alpha), 
\end{equation}
valid for $0\leq|\alpha-\beta|\leq r$, to signify the existence of a function $l(\alpha)$ defined on $(\beta-r,\beta+r)$ such that
\begin{equation}
\label{eq:simexplanation2}
k(\alpha)-L=h(\alpha)(1+l(\alpha))\,\,\,\,\,\,\,\,\,\,\,\,\text{where}\,\,\,\,\,\,\,\,\,\,\,\,\,\,\lim_{\alpha\rightarrow\beta}l(\alpha)=0.
\end{equation}
Note that for $0<q<1$, (\ref{eq:local}) induces a ``cusp'' on the graph of $\psi_0$ at $\overline\alpha$, a ``kink'' if $q=1$, and various degrees of continuity on its derivatives at $\overline\alpha$ when $q>1$. Moreover, the boundedness of $u_0>0$, along with (\ref{eq:local}) and $\psi_0'=fu_0$, implies that
\begin{equation}
\label{eq:local2}
f(\alpha)\sim qC_1(\alpha-\overline\alpha)\left|\alpha-\overline\alpha\right|^{q-2}
\end{equation}
for $0\leq\left|\alpha-\overline\alpha\right|\leq r$. From (\ref{eq:local2}), observe that $f$ is continuous at $\overline\alpha$ if $q>1$, while a jump discontinuity of finite or infinite magnitude at this location will exist when $q=1$ or $0<q<1$, respectively. In any event, the solution formula (\ref{eq:finalsolution}) remains valid due to the integral term $\psi_0$ being continuous for all $\alpha\in[0,1]$ and $q>0$. 
We now establish the following Theorem.

\begin{thm}
\label{thm:lp}
Consider the initial boundary value problem (\ref{eq:liouville}) for smooth initial data $u_0(\alpha)$ and let $t_*>0$ denote the finite $L^{\infty}(0,1)$ blowup time for $u$ established in Theorem \ref{thm:blowuptheorem}. Suppose the sign-changing function $f(\alpha)$ satisfies (\ref{eq:local2}) for $1/2<q<+\infty$, while \eqref{eq:blowcond} holds for the prescribed boundary data $g(t)\in C^0[0,t_*]$. Further, let $u_0$ and $f$ be such that $\psi_0$ in \eqref{eq:Psi} attains its greatest value $M_0>0$ at a finite number of points $\alpha_i\in(0,1)$, $1\leq i\leq n$. Then $\left\|u(\cdot,t)\right\|_p\to+\infty$ as $t\uparrow t_*$ for all $1\leq p\leq+\infty$.
\end{thm}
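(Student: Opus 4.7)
The case $p=\infty$ is precisely Theorem~\ref{thm:blowuptheorem}, so fix $1\le p<\infty$. Following the simplifying assumption in the statement, let $\overline\alpha\in(0,1)$ be the unique location where $\psi_0(\overline\alpha)=M_0>0$; the extension to finitely many maxima is obtained by partitioning $[0,1]$ into disjoint neighborhoods of each $\overline\alpha_i$ and applying the same argument on any one of them. The strategy is to localize and lower-bound $\|u(\cdot,t)\|_p^{\,p}$ by integrating only over a shrinking window around $\overline\alpha$ on which we can control the denominator of~\eqref{eq:finalsolution} via~\eqref{eq:local}. Set
\[
\epsilon(t):=1-\tfrac{1}{2}M_0\,G(t),
\]
so that $\epsilon$ is smooth and strictly decreasing on $[0,t_*)$ with $\epsilon(t)\downarrow 0$ as $t\uparrow t_*$ by~\eqref{eq:G*}.

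Next, I would expand the denominator. Using~\eqref{eq:local}, \eqref{eq:simexplanation2}, and writing $K(t):=-\tfrac{1}{2}C_1\,G(t)>0$, there exists $r>0$ small enough that $|l(\alpha)|\le \tfrac{1}{2}$ for $|\alpha-\overline\alpha|\le r$, and for such $\alpha$,
\[
1-\tfrac{1}{2}\psi_0(\alpha)\,G(t)\ =\ \epsilon(t)+K(t)\,|\alpha-\overline\alpha|^{q}\bigl(1+l(\alpha)\bigr)\ \le\ \epsilon(t)+\tfrac{3}{2}K(t)\,|\alpha-\overline\alpha|^{q}.
\]
By continuity of $G$ and of $u_0$, $g$, for $t$ sufficiently close to $t_*$ one has $K(t)\le 2K_*$ (with $K_*:=-\tfrac{1}{2}C_1G(t_*)$), $u_0(\alpha)\ge\tfrac{1}{2}u_0(\overline\alpha)$, and $g(t)\ge\tfrac{1}{2}g(t_*)$ on the relevant sets.

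Now introduce the window $\eta(t):=\bigl(\epsilon(t)/(3K_*)\bigr)^{1/q}$, which tends to $0$ and eventually lies inside $[-r,r]$. By the displayed inequality above, for $|\alpha-\overline\alpha|\le\eta(t)$ one has $1-\tfrac{1}{2}\psi_0(\alpha)G(t)\le 2\epsilon(t)$, and therefore the integrand of $\|u(\cdot,t)\|_p^{\,p}$ is bounded below by $c_0\,\epsilon(t)^{-2p}$ for a positive constant $c_0$ depending only on $u_0(\overline\alpha)$ and $g(t_*)$. Integrating over this window of length $2\eta(t)$ yields
\[
\|u(\cdot,t)\|_p^{\,p}\ \ge\ 2c_0\,\eta(t)\,\epsilon(t)^{-2p}\ =\ c_1\,\epsilon(t)^{\,1/q-2p}
\]
for some $c_1>0$.

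The hypothesis $q>1/2$ combined with $p\ge 1$ gives $2pq>1$, hence the exponent $1/q-2p$ is strictly negative and $\epsilon(t)^{\,1/q-2p}\to+\infty$ as $\epsilon(t)\downarrow 0$, i.e.\ as $t\uparrow t_*$. This proves $\|u(\cdot,t)\|_p\to+\infty$. The main technical point is to pin down $r$ so that $1+l(\alpha)$ stays inside $[\tfrac{1}{2},\tfrac{3}{2}]$ and to verify uniformity of the lower bounds for $u_0$, $g$, and $K$ on sets that shrink to $(\overline\alpha,t_*)$; both reduce to elementary continuity arguments combined with the definition of $\sim$ given in~\eqref{eq:simexplanation}--\eqref{eq:simexplanation2}. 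The restriction $q>1/2$ is precisely the borderline for $L^1$ integrability of the limiting profile $u(\cdot,t_*)$, confirming why it is the natural threshold appearing in the statement.
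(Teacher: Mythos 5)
Your argument is correct, and it reaches the conclusion by a genuinely different route from the paper. The paper first applies Jensen's inequality on the probability space $[0,1]$ to reduce every $L^p$ norm, $1\le p<\infty$, to the $L^1$ norm, and then computes the asymptotics of $\int_0^1\bigl(1-\tfrac{1}{2}G(t)\psi_0(\alpha)\bigr)^{-2}d\alpha$ exactly, via the substitution $\sqrt{|C_1|/\epsilon}\,|\alpha-\overline\alpha|^{q/2}=\tan\theta$ and a beta-function identity, obtaining the precise rate $\left\|u(\cdot,t)\right\|_p\gtrsim C\left(G(t_*)-G(t)\right)^{1/q-2}$ with an explicit constant involving $\Gamma(1+\tfrac1q)\Gamma(2-\tfrac1q)$; the single hypothesis $q>1/2$ is exactly what makes that $L^1$ lower bound diverge. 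You instead localize to the shrinking window $|\alpha-\overline\alpha|\le\eta(t)\sim\epsilon(t)^{1/q}$, on which the denominator is uniformly comparable to $\epsilon(t)$, and bound $\left\|u(\cdot,t)\right\|_p^p$ below by (window length)$\times\epsilon^{-2p}$, giving $\left\|u(\cdot,t)\right\|_p\gtrsim\epsilon^{1/(pq)-2}$. This is cruder (no sharp constant, no exact rate for the full integral) but entirely sufficient, and it is in fact slightly stronger in two respects: for $p>1$ your exponent $1/(pq)-2$ gives a faster divergence rate than the paper's $1/q-2$, and your argument shows that for fixed $p>1$ divergence already holds under the weaker condition $q>1/(2p)$, whereas the paper's reduction to $L^1$ genuinely needs $q>1/2$. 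Your bookkeeping is sound: the use of \eqref{eq:simexplanation2} to confine $1+l(\alpha)$ to $[\tfrac12,\tfrac32]$, the positivity of $\epsilon(t)=1-\tfrac12 M_0G(t)$ on $[0,t_*)$ via \eqref{eq:G*}, and the continuity lower bounds on $u_0$ and $g$ are all that is required, and the $p=\infty$ case is indeed already Theorem~\ref{thm:blowuptheorem}. The only cosmetic remark is that $K(t)\le K_*$ holds outright by monotonicity of $G$, so the factor $2$ in $K(t)\le 2K_*$ is unnecessary.
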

\begin{proof}
Applying Jensen's inequality to (\ref{eq:finalsolution}) implies that
\begin{equation}
\label{eq:jensen}
\left\|u(\cdot,t)\right\|_p\geq\int_0^1{\frac{u_0(\alpha)g(t)}{\left(1-\frac{1}{2}G(t)\psi_0(\alpha)\right)^2}d\alpha}
\end{equation}
for $1\leq p<+\infty$. But from (\ref{eq:local}),
$$\epsilon+M_0-\psi_0(\alpha)\sim \epsilon+\left|C_1\right|\left|\alpha-\overline\alpha\right|^q$$
for $0\leq\left|\alpha-\overline\alpha\right|\leq r$ and $\epsilon>0$ small. Consequently, 
\begin{equation*}
\begin{split}
&\int_{\overline\alpha-r}^{\overline\alpha+r}{\frac{d\alpha}{(\epsilon+M_0-\psi_0(\alpha))^2}}\sim\int_{\overline\alpha-r}^{\overline\alpha+r}{\frac{d\alpha}{(\epsilon+\left|C_1\right|\left|\alpha-\overline\alpha\right|^q)^2}}
\\
&=\epsilon^{-2}\left[\int_{\overline\alpha-r}^{\overline\alpha}{\left(1+\frac{\left|C_1\right|}{\epsilon}\left(\overline\alpha-\alpha\right)^q\right)^{-2}d\alpha}+\int_{\overline\alpha}^{\overline\alpha+r}{\left(1+\frac{\left|C_1\right|}{\epsilon}\left(\alpha-\overline\alpha\right)^q\right)^{-2}d\alpha}\right].
\end{split}
\end{equation*}
Making the change of variables 
$$\sqrt{\frac{\left|C_1\right|}{\epsilon}}(\overline\alpha-\alpha)^{\frac{q}{2}}=\tan\theta,\,\,\,\,\,\,\,\,\,\,\,\,\,\,\sqrt{\frac{\left|C_1\right|}{\epsilon}}(\alpha-\overline\alpha)^{\frac{q}{2}}=\tan\theta$$
in the first and respectively second integral inside the bracket, we find that
\begin{equation}
\label{eq:general2}
\begin{split}
&\int_{\overline\alpha-r}^{\overline\alpha+r}{\frac{d\alpha}{(\epsilon+M_0-\psi_0(\alpha))^2}}\sim\frac{4\,\epsilon^{\frac{1}{q}-2}}{q\left|C_1\right|^{\frac{1}{q}}}\int_0^{\frac{\pi}{2}}{\frac{(\cos\theta)^{^{3-\frac{2}{q}}}}{(\sin\theta)^{^{1-\frac{2}{q}}}}d\theta}
\end{split}
\end{equation}
for small $\epsilon>0$. Suppose $q>1/2$. Then setting $\epsilon=\frac{2}{G}-M_0$ in (\ref{eq:general2}) implies that
\begin{equation}
\label{eq:general3}
\begin{split}
\int_{0}^{1}{\frac{d\alpha}{\left(1-\frac{1}{2}G(t)\psi_0(\alpha)\right)^2}}\sim C\left(G(t_*)-G(t)\right)^{^{\frac{1}{q}-2}}
\end{split}
\end{equation}
for $G(t_*)-G(t)>0$ small, $G(t_*)=\frac{2}{M_0}$ and $C\in\mathbb{R}^+$ given by
\begin{equation}
\label{eq:generalcst}
\begin{split}
C=\frac{8}{M_0^2}\left(\frac{M_0^2}{2\left|C_1\right|}\right)^{\frac{1}{q}}\Gamma\left(1+\frac{1}{q}\right)\Gamma\left(2-\frac{1}{q}\right)
\end{split}
\end{equation}
with $\Gamma\left(\cdot\right)$ the standard gamma function. We remark that the constant (\ref{eq:generalcst}) has been obtained via the identity
\begin{equation}
\label{eq:gammarel2}
\begin{split}
2\int_0^{\frac{\pi}{2}}{\frac{(\cos\theta)^{^{3-\frac{2}{q}}}}{(\sin\theta)^{^{1-\frac{2}{q}}}}d\theta}=q\,\Gamma\left(1+\frac{1}{q}\right)\Gamma\left(2-\frac{1}{q}\right),\,\,\,\,\,\,\,\,\,\,\,\,2>\frac{1}{q},
\end{split}
\end{equation}
which follows from well-known properties of the beta function. Then using (\ref{eq:general3}) on (\ref{eq:jensen}) yields
\begin{equation}
\label{eq:lower1}
\begin{split}
\left\|u(\cdot,t)\right\|_p\geq\int_0^1{\frac{u_0(\alpha)g(t)}{\left(1-\frac{1}{2}G(t)\psi_0(\alpha)\right)^2}d\alpha}\sim \frac{Cg(t_*)m_0}{\left(G(t_*)-G\right)^{2-\frac{1}{q}}}
\end{split}
\end{equation}
for $G(t_*)-G(t)>0$ small, $q>1/2$ and where, as a result of the boundedness and continuity of $u_0$ and $g$ for $\alpha\in[0,1]$ and respectively $t\in[0,t_*]$,  both $m_0=\min_{\alpha\in[0,1]}u_0(\alpha)$ and $g(t_*)$ are finite, positive constants. Taking the limit as $t\uparrow t_*$ (so that by continuity $G(t)\uparrow G(t_*)$) in (\ref{eq:lower1}) yields
\begin{equation}
\label{eq:lower2}
\begin{split}
\lim_{t\uparrow t_*}\left\|u(\cdot,t)\right\|_p=+\infty
\end{split}
\end{equation}
for all $1\leq p\leq+\infty$. 
\end{proof}

\section{A Generalized Sign-changing Liouville Equation}
\label{sec:generalization}
In this section we study regularity of solutions to the following generalization of \eqref{eq:liouville}:
\begin{equation}
\label{u}
\begin{cases}
\partial_{\alpha t}\ln u=f(\alpha)\mathcal{F}(u),\,\,\,\,\,\,\,\,\,\,\,&\alpha\in(0,1),\,\,\,t>0,
\\
u(\alpha,0)=u_0(\alpha),\,\,\,\,\,\,\,\,\,\,\,\,&\alpha\in[0,1],
\\
u(0,t)=u(1,t)=g(t),\,\,\,\,\,\,\,\,\,\,\,\,\,&t\geq0,
\end{cases}
\end{equation}
for $\mathcal{F}(z)$ an arbitrary differentiable function of $z$.
We will be particularly interested in the cases where the prescribed smooth boundary data $g(t)>0$ is either a non-decreasing function of time, $\dot g\geq0$, or a non-increasing one, $\dot g\leq0$. We begin by establishing the following blowup result for the former:

\begin{thm}
\label{general}
Consider the initial boundary value problem \eqref{u} for smooth initial data $u_0>0$ and smooth boundary data $g(t)>0$ satisfying $\dot g(t)\geq0$. Let $\alpha_0$ denote the first location in $(0,1)$ where the sign-changing function $f$ vanishes and assume there are positive constants $c$ and $d$ such that $\mathcal{F}(u)\in C^1(0,+\infty)$ satisfies
\begin{equation}
\label{ass}
0\leq\mathcal{F}(u),\quad\qquad c\mathcal{F}(u)\leq u\mathcal{F}'(u)\leq d\mathcal{F}(u)
\end{equation}
for $ '=\frac{d}{du}$. Then $u\to+\infty$ earliest at $\alpha=\alpha_0$ as $t$ approaches the finite time $t^*(\alpha_0)$ in \eqref{blowtime}.
\end{thm}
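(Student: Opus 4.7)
The plan is to reduce the PDE to a Riccati-type inequality for an auxiliary spatial integral and then transfer the resulting blowup back to $u(\alpha_0,\cdot)$ via a monotonicity comparison. First, integrating (\ref{u})i in $\alpha$ from $0$ and using $u(0,t)=g(t)$ gives
\begin{equation*}
\partial_t\ln u(\alpha,t)=\frac{\dot g(t)}{g(t)}+\int_0^\alpha f(w)\mathcal{F}(u(w,t))\,dw.
\end{equation*}
Since $f$ must change sign in $(0,1)$ by the periodicity discussion in \S\ref{sec:intro} and $\alpha_0$ is its first zero, we may take $f>0$ on $(0,\alpha_0)$. Combined with $\mathcal{F}\ge 0$ and $\dot g\ge 0$, this yields $\partial_t\ln u(w,t)\ge 0$ on $[0,\alpha_0]$. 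Moreover $w\mapsto\int_0^w f\mathcal{F}(u)\,dz$ is non-decreasing on $[0,\alpha_0]$ and dominates $\int_0^\alpha f\mathcal{F}(u)\,dz$ for $\alpha\in(\alpha_0,1)$ where $f\le 0$ past its first zero, so $\partial_t\ln u(w,t)\le\partial_t\ln u(\alpha_0,t)$. Integration in time then yields the key pointwise comparison
\begin{equation*}
u(w,t)\le\frac{u_0(w)}{u_0(\alpha_0)}\,u(\alpha_0,t),
\end{equation*}
valid throughout the domain of existence, which already shows that no point $w\neq\alpha_0$ can blow up strictly before $\alpha_0$ does.

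Next I would define $I(t)=\int_0^{\alpha_0}f(w)\mathcal{F}(u(w,t))\,dw>0$, so that $\partial_t\ln u(\alpha_0,t)=\dot g/g+I$. Differentiating $I$ in time, applying $\partial_t u=u\,\partial_t\ln u$, and using the hypothesis $c\mathcal{F}(u)\le u\mathcal{F}'(u)$ together with non-negativity of the integrand,
\begin{equation*}
\dot I(t)\ge c\int_0^{\alpha_0}f\mathcal{F}(u)\!\left[\frac{\dot g}{g}+\int_0^w f\mathcal{F}(u)\,dz\right]dw=c\!\left[\frac{\dot g}{g}\,I+\tfrac{1}{2}I^2\right],
\end{equation*}
the last equality following from $\int_0^{\alpha_0} J J'\,dw=\tfrac{1}{2}I^2$ with $J(w)=\int_0^w f\mathcal{F}(u)\,dz$. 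The substitution $K=g^{-c}I$ absorbs the linear term and gives the pure Riccati inequality $\dot K\ge\tfrac{c}{2}g^c K^2$. Integrating $-\tfrac{d}{dt}(1/K)\ge\tfrac{c}{2}g^c$ forces $I(t)\to+\infty$ at a finite time $t^{\ast}(\alpha_0)$ obeying $\int_0^{t^{\ast}}g(s)^c\,ds\le 2/(c\,I(0))$, which is the content of \eqref{blowtime}.

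Third, I transfer the blowup of $I$ to $u(\alpha_0,\cdot)$. Suppose for contradiction that $u(\alpha_0,t)$ stayed bounded on a neighborhood of $t^{\ast}(\alpha_0)$. Then by the comparison from the first step, $u(w,t)$ would be uniformly bounded on $[0,\alpha_0]$ for $t$ near $t^{\ast}$, hence $\mathcal{F}(u(w,t))$ would be uniformly bounded there, and $I(t)$ would remain finite---contradicting $I\to\infty$ at $t^{\ast}$. Thus $u(\alpha_0,t)\to+\infty$ as $t\uparrow t^{\ast}(\alpha_0)$, and combining this with the comparison of step one identifies $\alpha_0$ as the earliest blowup location.

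I expect the main obstacle to be the transfer step: the bare inequality $\dot I\ge(c/2)I^2$ does not preclude a regime such as $I\sim(t^{\ast}-t)^{-1/2}$ in which $I$ diverges while $\int_0^{t^{\ast}}I\,ds<\infty$, so the ODE $\partial_t\ln u(\alpha_0,t)=\dot g/g+I$ by itself would leave $u(\alpha_0,\cdot)$ finite past $t^{\ast}$. The pointwise bound from step one---which ties the growth of $\mathcal{F}(u(w,\cdot))$ across the whole interval $[0,\alpha_0]$ directly to $u(\alpha_0,\cdot)$---is what closes that gap, so establishing this comparison cleanly at the outset is the crux of the argument.
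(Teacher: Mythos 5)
Your argument is correct, and its core is the same as the paper's: reducing \eqref{u}i) to a Riccati inequality for the spatial integral $\int_0^\alpha f\,\mathcal{F}(u)\,dx$ by means of $c\mathcal{F}(u)\le u\mathcal{F}'(u)$ and the identity $\int_0^{\alpha_0}JJ'\,dw=\tfrac12 I^2$; your $I(t)$ is exactly the paper's $H(\alpha_0,t)$ from \eqref{H}, and your inequality for $\dot I$ is \eqref{Heq} specialized to $\alpha=\alpha_0$ and estimated via \eqref{ass}. You deviate in two places. First, you keep the $\dot g/g$ term and absorb it with the integrating factor $K=g^{-c}I$, so your blowup criterion reads $\int_0^{t^*}g(s)^c\,ds=2/(cI(0))$; the paper simply discards that term (it has a favorable sign since $\dot g\ge 0$ and $H>0$), arriving at \eqref{Heq3} and hence at $t^*=2/(cH_0(\alpha_0))$ as in \eqref{blowtime}. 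Your bound on the blowup time is generally sharper when $g\ge g(0)=1$, but it does not literally reproduce the formula \eqref{blowtime} unless $g\equiv 1$; since both are only upper bounds on the true breakdown time, nothing essential is lost. Second, your transfer step is genuinely different: you prove the spatial comparison $u(w,t)\le (u_0(w)/u_0(\alpha_0))\,u(\alpha_0,t)$ and argue by contradiction, whereas the paper integrates the pointwise consequence $H(\alpha,t)\ge H_0(\alpha)\bigl(1-\tfrac{c}{2}H_0(\alpha)t\bigr)^{-1}$ of \eqref{Heq4} once more in time to obtain the explicit lower bound \eqref{blowincreasing}, which diverges at $t^*$. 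Note that this makes your worry about a regime $I\sim (t^*-t)^{-1/2}$ moot if the solution persists up to $t^*$: \eqref{Heq4} forces $I$ to dominate a $(t^*-t)^{-1}$ singularity, which is non-integrable; your contradiction argument is still useful for ruling out breakdown strictly before $t^*$, and your comparison gives a cleaner justification of the ``earliest at $\alpha_0$'' claim than the paper's (which only compares lower bounds over $\alpha\in(0,\alpha_0]$). One caveat shared by both arguments: your assertion that $\int_0^\alpha f\mathcal{F}(u)\,dz$ is maximized at $\alpha_0$ over the whole interval relies on $f$ changing sign exactly once, i.e.\ on the normalization \eqref{fass} that the paper adopts ``without loss of generality.''
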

\begin{proof}
Let $\alpha_0$ be the first zero of $f(\alpha)$ in $(0,1)$. This is guaranteed to exist due to periodicity of $u$ and \eqref{ass}i), which imply that $\int_0^1{f(\alpha)\mathcal{F}(u)\,d\alpha}\equiv0$. More particularly, and without loss of generality, suppose 
\begin{equation}
\label{fass}
f(\alpha)
\begin{cases}
>0,\qquad &\alpha\in[0,\alpha_0),
\\
=0,\qquad\qquad &\alpha=\alpha_0,
\\
<0,\qquad &\alpha\in(\alpha_0,1].
\end{cases}
\end{equation}
Define
\begin{equation}
\label{H}
H(\alpha,t)\equiv(\ln u)_t\big|_0^{\alpha}=\frac{\dot u(\alpha,t)}{u(\alpha,t)}-\frac{\dot g(t)}{g(t)}
\end{equation}
and note that, as a result of \eqref{u}i) and \eqref{fass}, 
\begin{equation}
\label{Hpos}
H(\alpha,t)>0\qquad\quad \alpha\in(0,\alpha_0].
\end{equation}
Now, a straight-forward computation shows that $H$ satisfies
\begin{equation}
\label{Heq}
H_t(\alpha,t) = \int_0^\alpha f(x)\mathcal{F}'(u)u(x,t)H(x,t)\,dx + \frac{\dot{g}(t)}{g(t)}\int_0^\alpha f(x)\mathcal{F}'(u)u(x,t)\,dx.
\end{equation}
Suppose the boundary data is such that $\dot g\geq0$. Then using \eqref{ass}, \eqref{H} and  \eqref{Hpos}, and subsequently \eqref{u}i) on the right-hand side of \eqref{Heq}, we obtain
\begin{equation}
\label{Heq2}
H_t\geq\frac{c}{2}\left(\frac{\dot u}{u}\right)^2-\frac{c}{2}\left(\frac{\dot g}{g}\right)^2
\end{equation}
for $\alpha\in(0,\alpha_0]$. Since $\dot g/g\geq0$, \eqref{Heq2} then yields
\begin{equation}
\label{Heq3}
H_t\geq\frac{c}{2}H^2,\qquad\quad \alpha\in(0,\alpha_0],
\end{equation}
which we integrate to obtain
\begin{equation}
\label{Heq4}
0< \frac{1}{H(\alpha,t)}\leq\frac{1}{H_0(\alpha)}-\frac{c}{2}t,\qquad\quad \alpha\in(0,\alpha_0]
\end{equation}
where $H_0(\alpha)=H(0,\alpha)$. From \eqref{Heq4} and smoothness of $g$, we infer that $\dot u/u\to+\infty$ as $t$ approaches $t^*$ defined by
\begin{equation}
\label{tstar}
t^*\equiv\frac{2}{cH^*},\qquad\qquad H^*\equiv\max_{\alpha\in(0,\alpha_0]}H_0(\alpha).
\end{equation}
However, \eqref{u}i) implies that
\begin{equation}
\label{H0}
H_0(\alpha)=\int_0^{\alpha}{f(x)\mathcal{F}(u_0)\,dx}>0,\quad\qquad \alpha\in(0,\alpha_0],
\end{equation}
from which we conclude, by \eqref{fass}, that 
\begin{equation}
\label{blowtime}
t^*=\frac{2}{cH_0(\alpha_0)}.
\end{equation}
Thus $\dot u/u$ will diverge earliest, as $t\uparrow t^*$, at $\alpha=\alpha_0$. Lastly, for $t\in[0,t^*)$ and $\alpha\in(0,\alpha_0]$, \eqref{Heq4} implies that 
$$\partial_t\ln u\geq \partial_t\left[\ln g-\frac{2}{c}\ln\left(\frac{1}{H_0(\alpha)}-\frac{c}{2}t\right)\right],$$
which yields, upon integration and some simplification,
\begin{equation}
\label{blowincreasing}
u(\alpha,t)\geq\frac{g(t)u_0(\alpha)}{\left(1-\frac{c}{2}H_0(\alpha)t\right)^{2/c}}.
\end{equation}
From the above we infer that
$$\lim_{t\uparrow t^*}u(\alpha_0,t)=+\infty.$$
\end{proof}
Last we establish sufficient conditions for finite-time blowup or global-in-time existence of $u$ on $[0,\alpha_0]$ in the case $\dot g\leq0$.

\begin{thm}
\label{general2}
Consider the initial boundary value problem \eqref{u} for smooth initial data $u_0>0$ and smooth boundary data $g(t)>0$ satisfying $\dot g(t)\leq0$. Let $\alpha_0$ denote the first location in $(0,1)$ where the sign-changing function $f$ vanishes and assume there are positive constants $c$ and $d$ such that $\mathcal{F}(u)\in C^1(0,+\infty)$ satisfies \eqref{ass}. Then the following hold:
\begin{enumerate}
\item If $g$ is such that 
\begin{equation}
\label{gcond2}
\begin{split}
\lim_{t\to+\infty}\int_0^t{g(s)^dds}>\frac{2}{cH_0(\alpha_0)},
\end{split}
\end{equation}
then there exists a finite $t_*>0$ such that $u\to+\infty$ earliest at $\alpha=\alpha_0$ as $t\uparrow t_*$.
\vspace{0.1in}

\item If $g$ satisfies
\begin{equation}
\label{gcond3}
\begin{split}
\lim_{t\to+\infty}\int_0^t{g(s)^{c}ds}\leq\frac{2}{dH_0(\alpha_0)},
\end{split}
\end{equation}
then $u$ persists globally in time.
\end{enumerate}
\end{thm}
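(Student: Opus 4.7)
The plan is to adapt the $H$-equation analysis of Theorem \ref{general}, keeping track of the fact that $\dot g/g\leq 0$ now reverses several of the inequalities used there. As before, on $(0,\alpha_0]$ one has $f\geq 0$, $H>0$, and the identity $H_\alpha=f\mathcal{F}(u)$ gives $\int_0^\alpha f\mathcal{F}(u) H\,dx = H^2/2$. Combined with \eqref{ass}, the two integrals appearing in \eqref{Heq} satisfy $cH^2/2 \leq \int_0^\alpha f\mathcal{F}'(u) u H\,dx \leq dH^2/2$ and $cH\leq\int_0^\alpha f\mathcal{F}'(u) u\,dx\leq dH$. Because $\dot g/g$ is non-positive, the effective lower bound for $H_t$ uses the \emph{upper} bound $dH$ on the second integral while the effective upper bound uses the \emph{lower} bound $cH$, yielding
\begin{equation*}
\tfrac{c}{2}H^2 + d\tfrac{\dot g}{g}\,H \;\leq\; H_t \;\leq\; \tfrac{d}{2}H^2 + c\tfrac{\dot g}{g}\,H \quad\text{on } (0,\alpha_0].
\end{equation*}

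To absorb the (now detrimental) linear term I introduce two integrating-factor substitutions: set $\hat H=H/g^d$ in the lower bound and $\tilde H=H/g^c$ in the upper bound. A direct calculation shows that the unwanted linear contributions cancel exactly, leaving the Bernoulli-type inequalities
\begin{equation*}
\hat H_t \;\geq\; \tfrac{c}{2}\,g^d\,\hat H^2 \quad\text{and}\quad \tilde H_t \;\leq\; \tfrac{d}{2}\,g^c\,\tilde H^2.
\end{equation*}
Dividing by the squares, integrating on $[0,t]$, and using $\hat H(0)=\tilde H(0)=H_0(\alpha)$ (since $g(0)=1$), I obtain
\begin{equation*}
\frac{1}{\hat H(\alpha,t)} \leq \frac{1}{H_0(\alpha)} - \frac{c}{2}\int_0^t g(s)^d\,ds, \qquad \frac{1}{\tilde H(\alpha,t)} \geq \frac{1}{H_0(\alpha)} - \frac{d}{2}\int_0^t g(s)^c\,ds.
\end{equation*}
Since $H_0(\alpha)=\int_0^\alpha f(x)\mathcal{F}(u_0(x))\,dx$ is non-decreasing on $(0,\alpha_0]$, its maximum on that interval is $H_0(\alpha_0)$, which pins down $\alpha_0$ as the earliest candidate blowup location.

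For part (1), condition \eqref{gcond2} drives the right-hand side of the first inequality to zero at a finite time $t_*$ characterized by $(c/2)\int_0^{t_*}g^d\,ds=1/H_0(\alpha_0)$, so $\hat H(\alpha_0,\cdot)$, and therefore $H(\alpha_0,\cdot)$, must blow up no later than $t_*$. To extract divergence of $u$ itself I integrate $\dot u/u=H+\dot g/g$ to obtain $\ln u(\alpha_0,t)=\ln[u_0(\alpha_0)g(t)]+\int_0^t H(\alpha_0,s)\,ds$, insert the explicit lower bound $H(\alpha_0,s)\geq g(s)^d/\bigl(1/H_0(\alpha_0)-(c/2)\int_0^s g(\sigma)^d\,d\sigma\bigr)$, and observe that the resulting $s$-integral diverges logarithmically as $t\uparrow t_*$, yielding $u(\alpha_0,t)\to+\infty$. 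For part (2), condition \eqref{gcond3} keeps the right-hand side of the second inequality strictly positive for every finite $t$, so $\tilde H$ (hence $H$) stays bounded on compact time intervals uniformly on $(0,\alpha_0]$; the same integration of $\dot u/u$ then yields finite bounds for $u(\alpha,t)$ on every $[0,T]$, i.e., global-in-time existence.

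The main obstacle is bookkeeping rather than analytic: the sign flip of $\dot g/g$ swaps the roles of the upper and lower bounds of $\int_0^\alpha f\mathcal{F}'(u)u\,dx$, and each integrating factor $g^d$ or $g^c$ must be paired with the matching constant in the quadratic term so that the linear $H$-contribution cancels cleanly. A secondary subtlety is confirming that $\alpha_0$ is truly the earliest blowup location and not merely the earliest location predicted by the bound; I would address this via the strict monotonicity of $H_0$ on $(0,\alpha_0]$ together with the sharpness of the Bernoulli estimate at $\alpha=\alpha_0$.
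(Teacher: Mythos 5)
Your proposal is correct and follows essentially the same route as the paper: the same differential inequalities $\tfrac{c}{2}H^2+d\tfrac{\dot g}{g}H\leq H_t\leq\tfrac{d}{2}H^2+c\tfrac{\dot g}{g}H$ on $(0,\alpha_0]$, the same integrating factors $g^c$ and $g^d$ (the paper works with $g^c/H$ and $g^d/H$ rather than $H/g^c$ and $H/g^d$, which is the same computation), and the same integration of $\dot u/u=H+\dot g/g$ to pass from the $H$-bounds to the two-sided bounds on $u$ and hence to blowup at $\alpha_0$ under \eqref{gcond2} and global existence under \eqref{gcond3}.
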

\begin{proof}
Without loss of generality we once again assume $f$ satisfies \eqref{fass}. First we obtain an upper bound for $u$. Since $\dot g\leq0$, using \eqref{u}i), \eqref{ass}, \eqref{fass} and \eqref{H}, on \eqref{Heq}, we obtain
\begin{equation}
\label{global0}
H_t(\alpha,t)\leq
c\frac{\dot g}{g}H(\alpha,t)+\frac{d}{2}H(\alpha,t)^2
\end{equation}
for $\alpha\in[0,\alpha_0]$. Rewriting the above as
\begin{equation}
\label{global1}
\partial_t\left(\frac{g^c}{H}\right)\geq-\frac{d}{2}g^c
\end{equation}
and integrating the latter yields
\begin{equation}
\label{ineqlower}
\frac{g(t)^c}{H(\alpha,t)}\geq\frac{1}{H_0(\alpha)}-\frac{d}{2}\int_0^t{g(s)^cds}
\end{equation}
for $\alpha\in(0,\alpha_0]$ and $H_0$ as in \eqref{H0}. In the above we also used the simplifying assumption $g(0)=1$. Multiplying both sides of \eqref{ineqlower} by $H$ and using \eqref{H}, we may integrate the resulting inequality in time to obtain
 \begin{equation}
\label{uppergen}
u(\alpha,t)\leq\frac{g(t)u_0(\alpha)}{\left(1-\frac{d}{2}H_0(\alpha)\int_0^t{g(s)^{c}ds}\right)^{2/d}}\,,
\end{equation}
which is valid on $[0,\alpha_0]$ and for as long as
\begin{equation}
\label{tinterval1}
\int_0^t{g(s)^{c}ds}<\frac{2}{dH_0(\alpha_0)}.
\end{equation}
Moreover, using \eqref{u}i), \eqref{ass}, \eqref{H} and $\dot g\leq0$ on \eqref{Heq}, leads to
\begin{equation}
\label{eqgen2}
\begin{split}
H_t(\alpha,t)\geq
d\frac{\dot g}{g}H(\alpha,t)+\frac{c}{2}H(\alpha,t)^2
\end{split}
\end{equation}
for all $\alpha\in[0,\alpha_0]$. Then proceeding as above we obtain
\begin{equation}
\label{eqgen3}
\begin{split}
0<\frac{g^d}{H(\alpha,t)}\leq\frac{1}{H_0(\alpha)}-\frac{c}{2}\int_0^t{g(s)^dds},
\end{split}
\end{equation}
from which we derive the lower-bound
\begin{equation}
\label{lowergen}
u(\alpha,t)\geq\frac{g(t)u_0(\alpha)}{\left(1-\frac{c}{2}H_0(\alpha)\int_0^t{g(s)^dds}\right)^{2/c}}.
\end{equation}
Inequality \eqref{lowergen} holds for all $\alpha\in[0,\alpha_0]$ and as long as
\begin{equation}
\label{tinterval2}
\int_0^t{g(s)^{d}ds}<\frac{2}{cH_0(\alpha_0)}.
\end{equation}
First suppose the smooth, non-increasing boundary data $g(t)>0$ is such that \eqref{gcond2} holds for some $d\in\mathbb{R}^+$. Then by continuity of $g$ there exists a finite $t_*>0$ such that 
\begin{equation}
\label{reqblow}
\lim_{t\uparrow t_*}\int_0^t{g(s)^dds}=\frac{2}{cH_0(\alpha_0)}
\end{equation}
and, thus,
$$\lim_{t\uparrow t_*}u(\alpha_0,t)=+\infty$$
by \eqref{lowergen}. This establishes the first part of the Theorem. Note that no conflict arises between this blowup and the upper-bound in \eqref{uppergen}. Indeed, since $d>c$ and $\dot g\leq0$ we have that
\begin{equation}
\label{condition}
\int_0^t{g(s)^dds}\leq\int_0^t{g(s)^{c}ds}\quad\qquad\text{and}\qquad\quad \frac{2}{dH_0(\alpha_0)}<\frac{2}{cH_0(\alpha_0)}.
\end{equation}
Consequently, \eqref{reqblow}, \eqref{condition} and continuity of $g$ imply the existence of $0<t_1<t_*$ such that the right-hand side of \eqref{uppergen} diverges as $t\uparrow t_1$. 

For the last part of the Theorem, suppose $g$ satisfies \eqref{gcond3}. Then \eqref{uppergen}, \eqref{lowergen} and \eqref{condition} imply that $u$ remains finite and positive for all $t\in\mathbb{R}^+$. This concludes the proof of the Theorem. 
\end{proof}
\begin{rem}
A simple example representative of the blowup result in Theorem \ref{general2} is given by $g(t)\geq e^{-kt}$ for $k\in\mathbb{R}^+$ fixed. In this case the right-hand side of \eqref{eqgen3} is bounded above by $1/H_0 -c (1-e^{-kdt})/2kd$, which reaches zero in finite time provided $H_0>2kd/c$.
\end{rem}

\section{Examples}
\label{sec:examples}

\subsection{Example 1 - Global Solution and Smooth Boundary Data}

Let $u_0(\alpha)\equiv1$, $f(\alpha)=2\alpha-1$ and $g(t)=2t+1$. Then (\ref{eq:Psi}) and (\ref{eq:G}) give
$$\psi_0(\alpha)=\alpha^2-\alpha,\,\,\,\,\,\,\,\,\,\,\,\,G(t)=t^2+t.$$
Note that $\psi_0\leq0$, so that $M_0=0$. Using (\ref{eq:finalsolution}), we obtain
\begin{equation}
\label{eq:globalex1}
u(\alpha,t)=\frac{2t+1}{\left(1-\frac{\alpha t}{2}(t+1)(\alpha-1)\right)^2}.
\end{equation}
The solution remains finite, and positive, for all $\alpha\in[0,1]$ and $0\leq t<+\infty$, whereas
\begin{equation}
\label{eq:globalex2}
\lim_{t\to+\infty}u(\alpha,t)=
\begin{cases}
0,\,\,\,\,\,\,\,\,\,\,\,\,\,\,&\alpha\in(0,1),
\\
+\infty,\,\,\,\,\,\,\,\,\,\,&\alpha\in\{0,1\}.
\end{cases}
\end{equation}
See Figure (\ref{fig:ex})(A) below.

\subsection{Example 2 - Finite-time Blow-up and Smooth Boundary Data}

Consider the same initial and boundary data as in Example 1, but now take $f(\alpha)=1-2\alpha$. Then (\ref{eq:Psi}) becomes $\psi_0(\alpha)=\alpha-\alpha^2$ with $M_0=1/4$ attained at $\overline\alpha_1=1/2$. The solution (\ref{eq:finalsolution}) now reads
\begin{equation}
\label{eq:blowex2}
u(\alpha,t)=\frac{2t+1}{\left(1-\frac{\alpha t}{2}(t+1)(1-\alpha)\right)^2}.
\end{equation}
Since $G(t)=t^2+t$, we solve $G(t)=8$ and find that $t_*=\frac{1}{2}\left(\sqrt{33}-1\right)\sim2.37.$ Using (\ref{eq:blowex2}) we conclude that 
\begin{equation}
\label{eq:blowex22}
\lim_{t\to t_*}u(\alpha,t)=
\begin{cases}
+\infty,\,\,\,\,\,\,\,\,\,\,\,\,\,\,&\alpha=\overline\alpha_1,
\\
\frac{\sqrt{33}}{(1-2\alpha)^4},\,\,\,\,\,\,\,\,\,\,&\alpha\neq\overline\alpha_1.
\end{cases}
\end{equation}
See Figure (\ref{fig:ex})(B) below.
\begin{center}
\begin{figure}[!ht]
\includegraphics[scale=0.34]{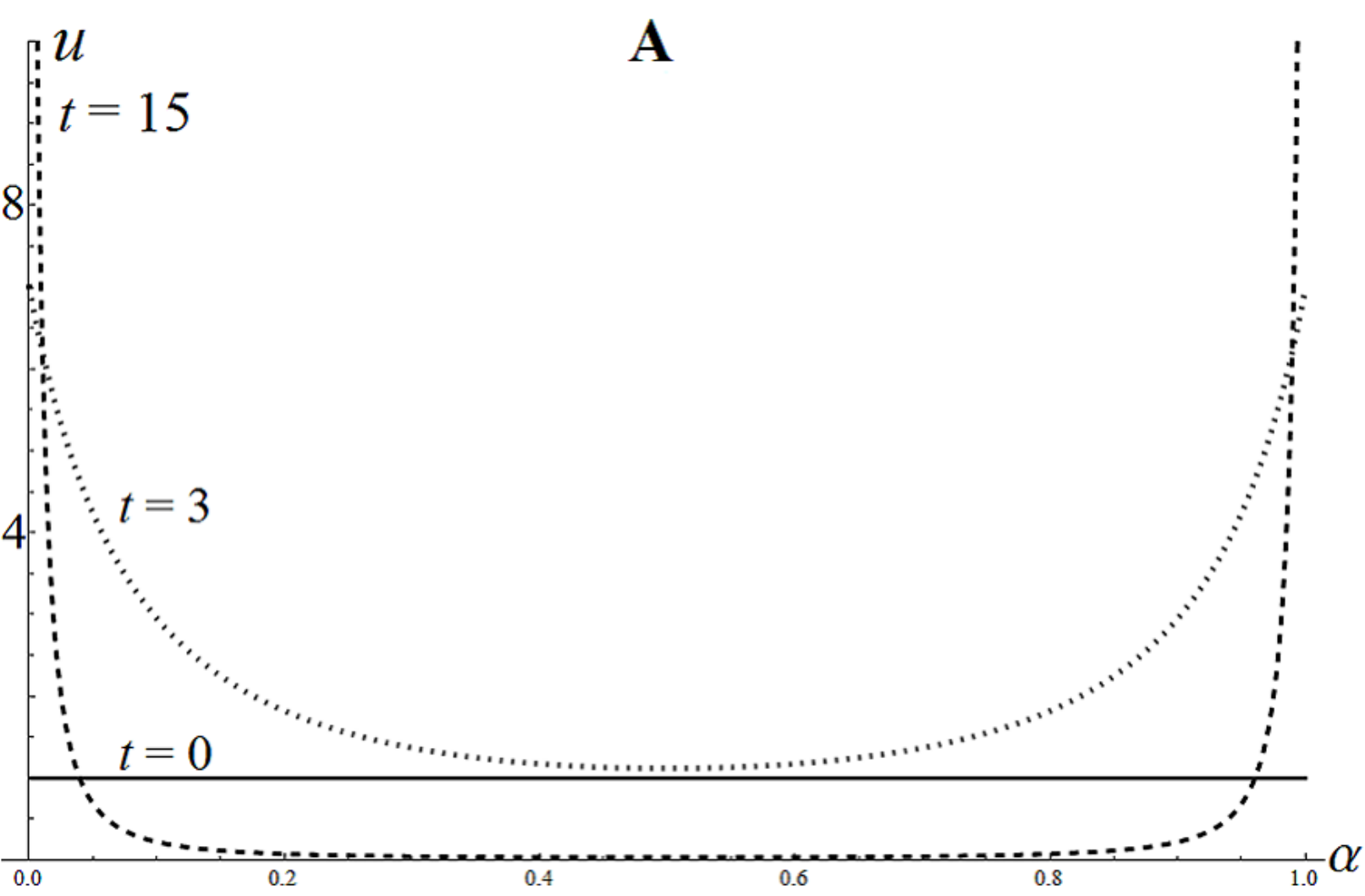} 
\includegraphics[scale=0.36]{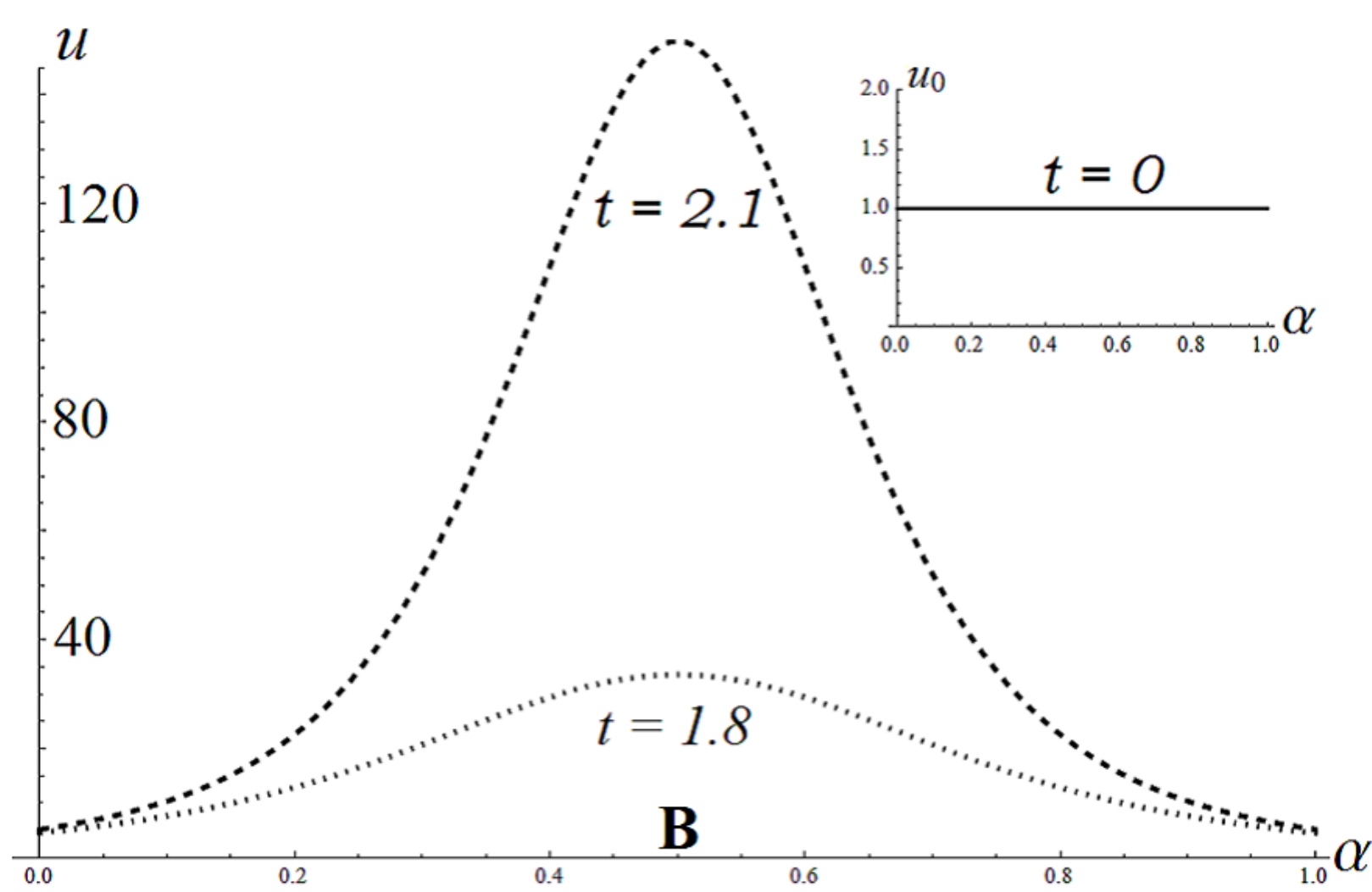} 
\caption{For Example 1 above, Figure A depicts the global-in-time behaviour of (\ref{eq:globalex1}), while B represents finite-time blowup of $u$ in (\ref{eq:blowex2}) as $t\uparrow t_*\sim2.37$.}
\label{fig:ex}
\end{figure}
\end{center}
The following two Examples are instances of Theorem \ref{thm:singular1}.

\subsection{Example 3 - Induced Boundary Blow-up for $M_0=0$}

For $u_0$ and $f$ as in Example 1, let $g(t)=(1-t)^{-2}$. This implies that $M_0=0$ occurs only at the boundary points. Then using (\ref{eq:usingular1.1}), we obtain
\begin{equation}
\label{eq:ex3}
u(\alpha,t)=\frac{4}{(2-t(\alpha^2-\alpha+2))^2},
\end{equation}
which diverges earliest, at the boundary points $\overline\alpha_i=\{0,1\}$, as $t\uparrow t_b=1$. In contrast, for $\alpha\in(0,1)$, $u$ remains finite (and positive) for all $t\in[0,1]$. In fact,
$$\lim_{t\uparrow 1}u(\alpha,t)=\frac{4}{(\alpha^2-\alpha)^2},\,\,\,\,\,\,\,\,\,\,\,\,\,\,\,\,\alpha\in(0,1).$$ See Figure \ref{fig:ex4}(A) below.

\subsection{Example 4 - Earlier Interior Blowup for $M_0>0$ with Singular $g(t)$}

Take $u_0$ and $f$ as in Example 2 and $g(t)=(1-t)^{-2}$. Then we now have $M_0=1/4$ occurring at $\overline\alpha_1=1/2$. From (\ref{eq:usingular1.1}), we obtain
\begin{equation}
\label{eq:ex4}
u(\alpha,t)=\frac{4}{(2+t(\alpha^2-\alpha-2))^2},
\end{equation}
which diverges earliest at $\alpha=\overline\alpha_1$ as  
$$t\uparrow t_*=\frac{2}{2+M_0}=\frac{8}{9}<t_b=1.$$ 
In contrast, for $\alpha\neq\overline\alpha_1$, $u$ remains finite and positive for all $t\in[0,t_*]$. In this case, the final solution profile is given by
$$\lim_{t\uparrow t_*}u(\alpha,t)=\left(\frac{9}{1-4\alpha+4\alpha^2}\right)^2,\,\,\,\,\,\,\,\,\,\,\,\,\,\,\,\,\alpha\in[0,1]\backslash\{\overline\alpha_1\}.$$ See Figure \ref{fig:ex4}(B) below.

\begin{center}
\begin{figure}[!ht]
\includegraphics[scale=0.32]{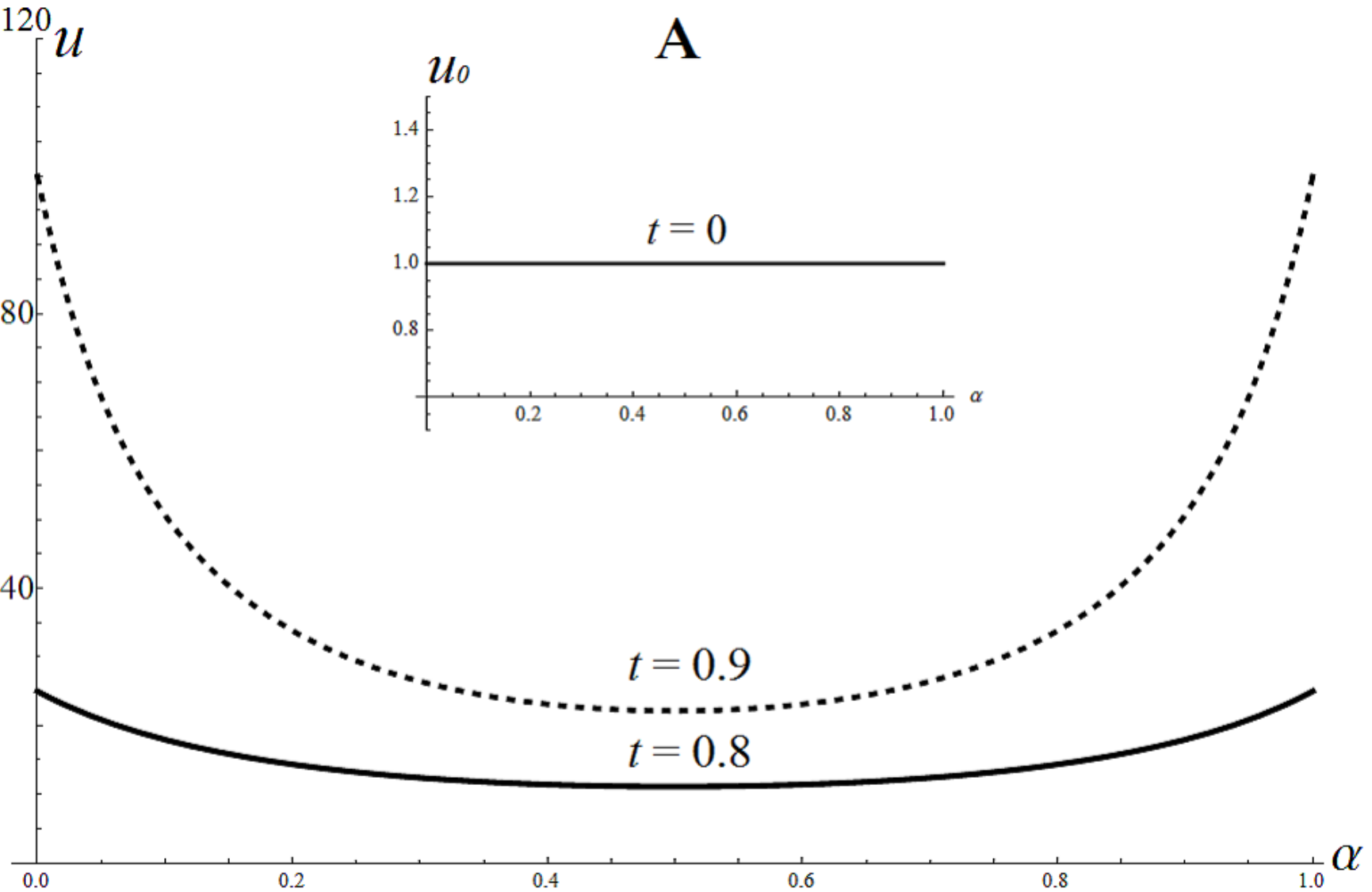} 
\includegraphics[scale=0.32]{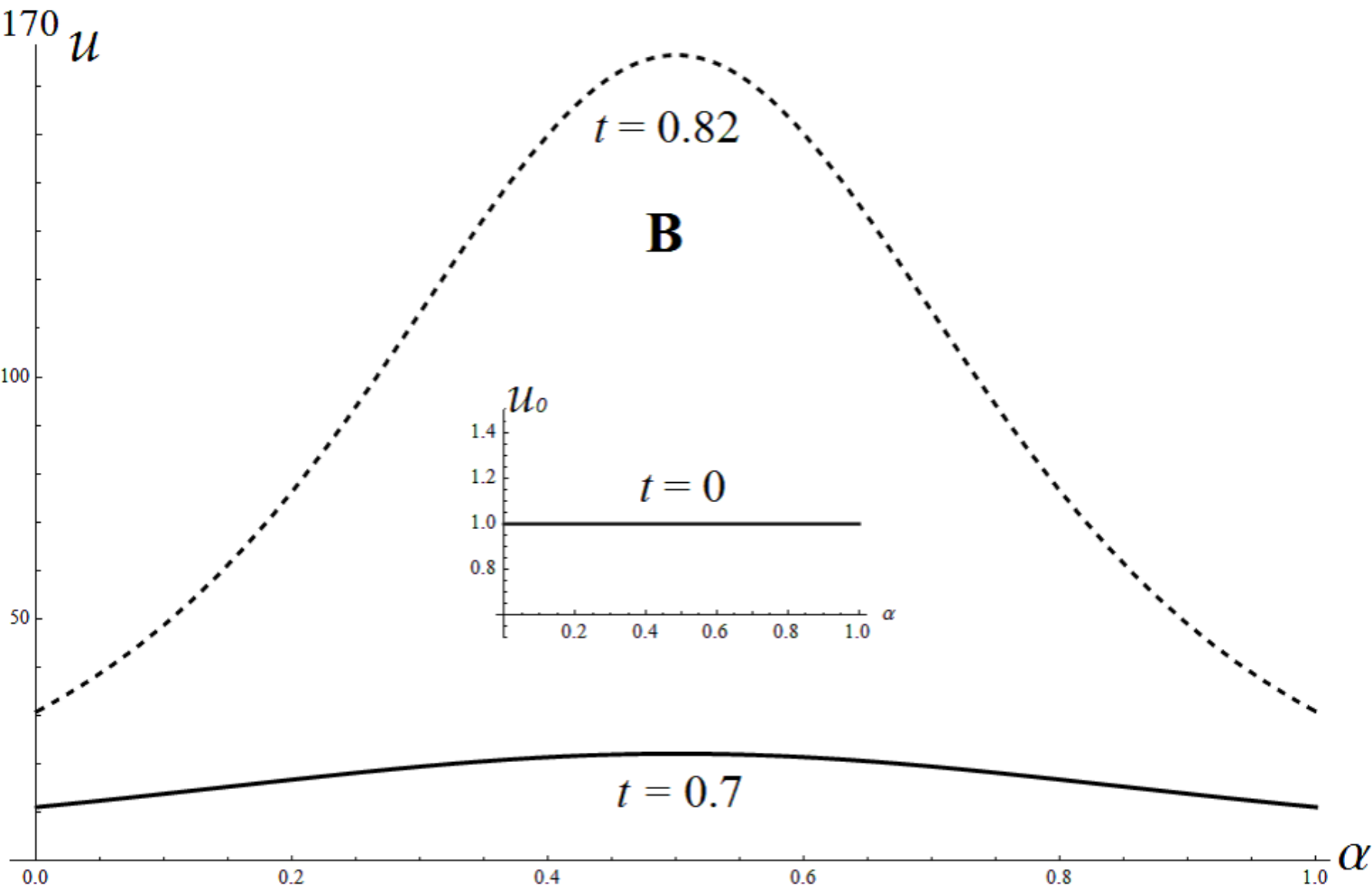} 
\caption{Blowup profiles for Examples 4 and 5 with singular boundary data. Figure A represents earliest blowup of (\ref{eq:ex3}) at the boundary as $t\uparrow t_b=1$ in the case where $M_0=0$, whereas, for $M_0>0$, Figure B depicts earliest blowup of (\ref{eq:ex4}) in the interior as $t\uparrow t_*<t_b$.}
\label{fig:ex4}
\end{figure}
\end{center}

\end{document}